\documentclass[10pt]{article}
\usepackage{amssymb}
\usepackage{amsmath}
\usepackage{amsfonts}
\usepackage{amsthm}
\usepackage{graphics,graphicx}
\usepackage{float}
\usepackage{mathtools}
\usepackage[dvipsnames]{xcolor}
\usepackage{tkz-graph}
\usetikzlibrary{shapes, arrows.meta, positioning}

\sloppy
\newtheorem{theorem}{Theorem}
\newtheorem{lemma}{Lemma}
\newtheorem{proposition}{Proposition}
\newtheorem{corollary}{Corollary}
\newtheorem{example}{Example}

\newtheorem{remark}{Remark}
\newtheorem{problem}{Problem}
\newtheorem{o-problem}{Open problem}
\renewenvironment{proof}{{\itshape Proof.}\noindent}{\qed}
\makeatletter
\renewcommand{\thefigure}{\@arabic\c@figure}
\makeatletter

\usepackage{authblk}

\title{Bicrucial $k$-power-free permutations}
\author[1]{Margarita Akhmejanova\thanks{mechmathrita@gmail.com}}
\author[2]{Aiya Kuchukova\thanks{akuchukova3@gatech.edu}}
\author[3]{Alexandr Valyuzhenich \thanks{corresponding author; graphkiper@mail.ru}}
\author[4]{Ilya Vorobyev \thanks{vorobyev.i.v@yandex.ru}}

\affil[2]{Georgia Institute of Technology, Atlanta, US}
\affil[3]{Postdoctoral Research Station of Mathematics, Hebei Normal University, Shijiazhuang 050024, China}
\affil[4]{IOTA Foundation, Berlin, Germany}
\date{}
\begin{document}
  \maketitle
  
\begin{abstract}
In this work, we prove that for every $k\geq 3$ there exist arbitrarily long bicrucial $k$-power-free permutations.
We also show that for every $k\geq 3$ there exist right-crucial $k$-power-free permutations of any length at least $(k-1)(2k+1)$.
\end{abstract}  

\textbf{Keywords:} bicrucial, right-crucial, crucial, square-free

\textbf{AMS classification:} 68R15, 05A05

\section{Introduction}
Bicrucial (maximal) abelian square-free words were investigated in \cite{B04,CM01,K03}, and
right-crucial (crucial) abelian $k$-power-free words were investigated in \cite{AGHK10,GHK10}.
In 2011, Avgustinovich et al. \cite{AKPV11}  initiated the study of bicrucial and right-crucial square-free permutations.
In particular, they proved that there exist bicrucial square-free permutations of lengths $8m+1$, $8m+5$, $8m+7$ for all $m\geq 1$.
In 2015, Gent et al. \cite{GKKLN15} showed that bicrucial square-free permutations of even length exist, and the smallest
such permutations are of length $32$.
They also showed that bicrucial square-free permutations of length $8m+3$ exist for $m=2,3$ and they do not exist for $m=1$.
In 2022, Groenland and Johnston \cite{GJ22} completed the classification of $n$ for which there
exist bicrucial square-free permutations of length $n$. Groenland and Johnston posed the following problem (see Problem 5.1 in \cite{GJ22}).

\begin{problem}
 Are there arbitrarily long bicrucial $k$-power-free permutations for $k\geq 3$? 
\end{problem}

In this work, we prove that for every $k\geq 3$ there exist arbitrarily long bicrucial $k$-power-free permutations.
We also show that for every $k\geq 3$ there exist right-crucial $k$-power-free permutations of any length at least $(k-1)(2k+1)$.

The paper is organized as follows. In Section \ref{Sec:Definitions}, we introduce basic definitions. 
In Section \ref{Sec:Prelim}, we give preliminary results.
In Section \ref{Sec:ConstrCrucial}, we give constructions of right-crucial $k$-power-free permutations.
In Section \ref{Sec:ConstrBicrucial}, we give constructions of bicrucial $k$-power-free permutations.
In Section \ref{Sec:Crucial}, we prove that for every $k\geq 3$ there exist right-crucial $k$-power-free permutations of any length at least $(k-1)(2k+1)$.
In Section \ref{Sec:Bicrucial}, we prove that for every $k\geq 3$ there exist arbitrarily long bicrucial $k$-power-free permutations.
In Section \ref{Open problem}, we state an open problem.

\section{Basic definitions}\label{Sec:Definitions}

A {\em permutation} of length $n$ is a word of length $n$ whose symbols are distinct integers.
For example, $213$ and $574$ are permutations of length $3$.
The length of a permutation $P$ is denoted by $|P|$.
A permutation $Q$ is a {\em factor} of a permutation $P$ if $P=P_1QP_2$ for some permutations $P_1$ and $P_2$ ($P_1$ and $P_2$ may be empty).
For example, $754$ and $543$ are factors of the permutation $275436$.
Note that any factor of a permutation is also a permutation.
A permutation $Q$ is a {\em suffix} of a permutation $P$ if $P=P_1Q$ for some permutation $P_1$.
A permutation $Q$ is a {\em prefix} of a permutation $P$ if $P=QP_2$ for some permutation $P_2$.

For a positive integer $n$, denote $[n]=\{1,\ldots,n\}$.
Two permutations $p_1\ldots p_n$ and $q_1\ldots q_n$ are called {\em order-isomorphic} 
if $(p_i-p_j)(q_i-q_j)>0$ for all $i,j\in [n]$, $i\neq j$.
For example, $1592$ is order-isomorphic to $1342$. 
We denote this equivalence by $\sim$.
For a positive integer $k\geq 2$, a {\em $k$-power} is a permutation of the form $X_1X_2\ldots X_k$, 
where $X_i\sim X_j$ for all $i,j\in [k]$ and $|X_1|\geq 2$.
For example, the permutation $461523$ is a $3$-power.
We say that a permutation $P$ {\em contains} a $k$-power if $P=P_1XP_2$, where $P_1$ and $P_2$ are permutations and $X$ is a $k$-power.
A permutation is called {\em $k$-power-free} if it does not contain $k$-powers.
A $2$-power-free permutation is also called {\em square-free}.
Various generalizations of squares and $k$-powers in permutations can be found in \cite{DGR21,DGR21-2}.

A permutation $Q$ of length $n+1$ is an {\em extension} of a permutation $P$ of length $n$ to the right (left) if the prefix (suffix) of $Q$ of length $n$ is order-isomorphic to $P$.
For example, $4956$ is an extension of $132$ to the right.

A permutation $P$ is called {\em $k$-right-crucial} ({\em $k$-left-crucial}) if $P$ is $k$-power-free and any extension of $P$ 
to the right (left) contains a $k$-power. 
For example, the permutation $3256417$ is 2-right-crucial.
A permutation is called {\em $k$-bicrucial} if it is both $k$-left-crucial and $k$-right-crucial.
Other types of crucial and bicrucial permutations can be found in \cite{AKV12,AKT23,C24}.

The {\em reverse} of a permutation $P=p_1p_2\ldots p_n$ is the permutation $\mathrm{rev}(P)=p_np_{n-1}\ldots p_1$.

Let $k$ and $r$ be integers. Given a permutation $P=p_1\ldots p_n$, define a permutation $k\cdot P+r$ by 
$k\cdot P+r=(kp_1+r)\ldots (kp_n+r)$.

\section{Preliminaries}\label{Sec:Prelim}

\textbf{Levels in permutations.}
Let $P=p_1\ldots p_n$ be a permutation and let $P$ contain no squares of length $4$.
Then there exists $i\in \{0,1,2,3\}$ such that for every non-negative integer $t$ and for all valid indices the inequalities

\begin{equation}\label{Eq:1}
p_{i+4t}<p_{i+4t\pm 1} \textrm{ and } p_{i+4t+2}>p_{i+4t+2\pm 1}
\end{equation}
hold (see \cite{AKPV11}). 
For a permutation $P$ satisfying (\ref{Eq:1}), 
we say that the symbols of $P$ with the indices $4t+i$ form the {\em lower} level,
the symbols of $P$ with the indices $4t+i\pm 1$ form the {\em medium} level, and
the symbols of $P$ with the indices $4t+i+2$ form the {\em upper} level.
For example, the permutation $2463157$ satisfies (\ref{Eq:1}) for $i=1$.
The symbols $2$ and $1$ form its lower level, the symbols $6$ and $7$ form its upper level, and the symbols $4$, $3$ and $5$ form its medium level.

\textbf{High-medium-low construction.} 
Now we briefly recall one construction of square-free permutations proposed by Avgustinovich et al. in \cite{AKPV11}.
Suppose $P$ is a square-free permutation.
Let $Q$ be a permutation satisfying the following conditions:

(S1) $Q$ satisfies (\ref{Eq:1}) for some $i\in \{0,1,2,3\}$,

(S2) any symbol from the lower level is less than any symbol from the medium level and any symbol from the medium level is less than any symbol from the upper level,  

(S3) symbols from the medium level form a permutation that is order-isomorphic to $P$.

Then $Q$ is square-free.

In what follows, we will need the following fact.
\begin{proposition}\label{Prop:2341}
Let $P$ be a permutation constructed by the high-medium-low construction. 
Then any factor of $P$ of length $4$ is not order-isomorphic to $2341$ and $3214$.
\end{proposition}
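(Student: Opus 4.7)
The plan is to exploit the rigid level structure of $P$: the extreme values in any length-$4$ window land at non-adjacent positions, while both forbidden patterns place them at consecutive positions.

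First, I would fix the index $i\in\{0,1,2,3\}$ from condition (S1). For any four consecutive indices $j,j+1,j+2,j+3$ of $P$, the level sequence is a cyclic shift of $\mathrm{LMUM}$: depending on $(j-i)\bmod 4$, it is one of $\mathrm{LMUM}$, $\mathrm{MUML}$, $\mathrm{UMLM}$, or $\mathrm{MLMU}$. In each of these four patterns, the unique $\mathrm{L}$ and the unique $\mathrm{U}$ are separated by a single $\mathrm{M}$, so they occupy positions $\{1,3\}$ or $\{2,4\}$, but never $\{3,4\}$.

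Second, by (S2), every lower-level symbol is strictly less than every medium-level symbol, and every medium-level symbol is strictly less than every upper-level symbol. Combined with the previous observation, this forces the minimum of any length-$4$ factor to sit at its $\mathrm{L}$ position and the maximum at its $\mathrm{U}$ position.

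Finally, I would observe that both $2341$ and $3214$ place their minimum and maximum at the adjacent positions $3$ and $4$: in $2341$ the maximum is at position $3$ and the minimum at position $4$, and in $3214$ the roles of min and max are swapped. Since the extreme values of any length-$4$ factor of $P$ lie at the $\mathrm{L}$ and $\mathrm{U}$ positions, which never form the pair $\{3,4\}$, neither pattern can occur. The argument is a short case analysis; the only mild subtlety is checking that the level labels are well-defined at all positions of $P$, but this is guaranteed by the construction and is not an essential obstacle.
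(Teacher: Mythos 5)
Your proof is correct, and it reaches the conclusion by a slightly different mechanism than the paper. The paper starts from the increasing run $x<y<z$ at the front of a putative occurrence of $2341$ (resp.\ the decreasing run in $3214$) and uses (S1)--(S2) to force those three symbols onto the lower, medium and upper levels in that order; the fourth symbol is then medium, hence larger than the lower-level symbol $x$, contradicting the pattern. You instead fix the positions first: since the level word of $P$ is periodic with period $\mathrm{LMUM}$, every window of four consecutive positions contains exactly one lower and one upper symbol, sitting at positions $\{1,3\}$ or $\{2,4\}$, and by (S2) these carry the minimum and maximum of the window; as $2341$ and $3214$ both put their extremes at the adjacent positions $3$ and $4$, neither can occur. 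Both arguments rest on the same two ingredients, (S1) and (S2), and are equally short; yours has the small bonus of proving the stronger statement that the minimum and maximum of \emph{any} length-$4$ factor of $P$ occupy non-adjacent positions, which rules out several further patterns besides the two needed here.
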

\begin{proof}
Let $V=xyzt$ be a factor of $P$ of length $4$ and let $V\sim 2341$.
Since $x<y<z$, $x$, $y$ and $z$ belong to the lower, medium and upper levels respectively.
This implies that $t$ belongs to the medium level. Therefore, $t>x$, i.e. $V\not\sim 2341$.
The proof for the case $V\sim 3214$ is similar.
\end{proof}

We also need the following two simple observations (both follow directly from the definitions).

\begin{proposition}\label{Prop:Crucial}
The following statements hold:
 
 \begin{enumerate}
 
 \item Let $P$ and $Q$ be order-isomorphic permutations. If $P$ is $k$-right-crucial, then $Q$ is also $k$-right-crucial.
 
 \item Let $P$ be a permutation. If $P$ is $k$-right-crucial, then $\mathrm{rev}(P)$ is $k$-left-crucial.  
 
 \end{enumerate}
\end{proposition}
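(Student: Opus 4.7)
The plan is to verify both parts by directly unpacking the definitions, checking in each case that (i) $k$-power-freeness is preserved under the operation in question, and (ii) every extension of the new permutation contains a $k$-power. Since the hypothesis tells me both parts should follow essentially from the definitions, I expect no clever construction to be needed; the work is organizational.

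For part 1, I would first observe that if $P \sim Q$ (so in particular $|P|=|Q|$), the index-wise correspondence between $P$ and $Q$ restricts to factors: the factor $q_i q_{i+1} \ldots q_j$ of $Q$ is order-isomorphic to the factor $p_i p_{i+1} \ldots p_j$ of $P$. Since the defining property of a $k$-power $X_1 \ldots X_k$ (namely $X_i \sim X_j$, and $|X_1|\ge 2$) depends only on the order-isomorphism type, a factor of $Q$ is a $k$-power if and only if the corresponding factor of $P$ is, so $Q$ is $k$-power-free. For cruciality, given any right-extension $Q'$ of $Q$, I would produce a right-extension $P'$ of $P$ with $P' \sim Q'$: just append to $P$ a symbol placed in the same relative position among the existing symbols as the final symbol of $Q'$ has among the symbols of $Q$ (inserting half-integer values and then renormalizing if needed). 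Since $P$ is $k$-right-crucial, $P'$ contains a $k$-power, and by the factor observation so does $Q'$.

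For part 2, the key observation is that reversal preserves $k$-powers: if $X_1 X_2 \ldots X_k$ is a $k$-power with $X_i \sim X_j$, then $\mathrm{rev}(X_1 X_2 \ldots X_k) = \mathrm{rev}(X_k) \ldots \mathrm{rev}(X_1)$, and order-isomorphism is visibly preserved under reversal, so $\mathrm{rev}(X_i) \sim \mathrm{rev}(X_j)$. Hence $P$ contains a $k$-power if and only if $\mathrm{rev}(P)$ does, which gives that $\mathrm{rev}(P)$ is $k$-power-free. For cruciality, every left-extension of $\mathrm{rev}(P)$ is the reverse of some right-extension of $P$; the latter contains a $k$-power by the assumption on $P$, and reversing that $k$-power produces a $k$-power inside the left-extension. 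The only point that takes a moment of care is the realization of an arbitrary right-extension $Q'$ of $Q$ as an isomorphic right-extension of $P$ in part 1; once that is written down, both statements collapse to checking definitions.
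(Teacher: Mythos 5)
Your proposal is correct and matches the paper's treatment: the paper offers no written proof, stating only that both claims ``follow directly from the definitions,'' and your argument is precisely the routine unpacking of those definitions (order-isomorphism preserves the order type of factors and of extensions, and reversal sends $k$-powers to $k$-powers and left-extensions to right-extensions). No gaps.
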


\begin{proposition}\label{Prop:Extension}
Let $V$ be a permutation. The following statements hold:
 
 \begin{enumerate}
 
 \item Let $S$ be a suffix of $V$. If any extension of $S$ to the right contains a $k$-power, then any extension of $V$ to the right also contains a $k$-power.
 
 \item Let $P$ be a prefix of $V$. If any extension of $P$ to the left contains a $k$-power, then any extension of $V$ to the left also contains a $k$-power.  
 
 \end{enumerate}
\end{proposition}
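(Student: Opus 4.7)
The plan is to unfold the definitions directly. For each part, I will show that every extension of $V$ at one end contains at that end, as a factor, an extension of the given suffix or prefix, so the hypothesis immediately applies.

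For part 1, I would fix an arbitrary extension $V' = v_1\ldots v_{n+1}$ of $V$ to the right, where $n=|V|$, so the prefix $v_1\ldots v_n$ is order-isomorphic to $V$ by definition of extension. The one elementary fact I need is that order-isomorphism is inherited by factors occupying the same index range: if $v_1\ldots v_n \sim V$, then for any $s$ the factor $v_{n-s+1}\ldots v_n$ is order-isomorphic to the suffix of $V$ of length $s$, namely $S$. Consequently the suffix $S' := v_{n-s+1}\ldots v_{n+1}$ of $V'$ has the property that its prefix of length $s$ is order-isomorphic to $S$, which means $S'$ is itself an extension of $S$ to the right. By the hypothesis, $S'$ contains a $k$-power, and since $S'$ is a factor of $V'$, so does $V'$.

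Part 2 follows by a symmetric argument (exchanging prefix with suffix and right-extension with left-extension throughout); alternatively, one can reduce it to part 1 by applying reverses and noting that a permutation contains a $k$-power iff its reverse does. The statement is essentially definition-chasing, so there is no genuine obstacle; the only thing to be careful about is to cite exactly the inheritance of order-isomorphism by matching factors, rather than a stronger statement.
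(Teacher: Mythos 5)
Your proof is correct: the key inheritance fact (order-isomorphic permutations have order-isomorphic factors in matching index positions) is exactly what makes the right-end segment $S'$ of the extension $V'$ an extension of $S$, and containment of a $k$-power passes from the factor $S'$ to $V'$. The paper states this proposition without proof as following directly from the definitions, and your argument is precisely that routine verification, so it matches the intended reasoning.
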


\section{Constructions of right-crucial $k$-power-free permutations}\label{Sec:ConstrCrucial}

In this section, we give constructions of $k$-right-crucial permutations.
Firstly, in Subsection \ref{SubSec:Short}, we present constructions $R_k$ of $k$-right-crucial permutations of length $(k-1)(2k+1)$.
Then, in Subsection \ref{SubSec:Long}, we present constructions $R_{m,k}$ of $k$-right-crucial permutations of length $m+(k-1)(2k+1)$ for all $m\geq 1$.

\subsection{Short right-crucial $k$-power-free permutations}\label{SubSec:Short}
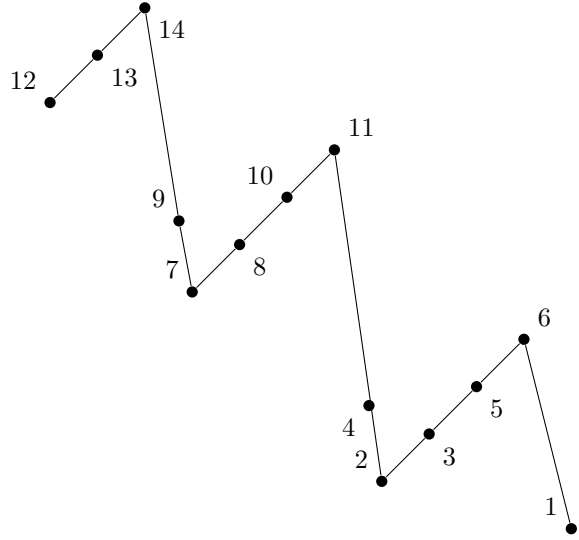
\begin{figure}[h!]
\centering
\begin{tikzpicture}[scale=0.63]
    \node (A1) at (14,3) [circle,fill,inner sep=1.5pt,label=above left:1] {};
    \node (A2) at (10,4) [circle,fill,inner sep=1.5pt,label=above left:2] {};
    \node (A3) at (11,5) [circle,fill,inner sep=1.5pt,label=below right:3] {};
    \node (A4) at (9.73,5.6) [circle,fill,inner sep=1.5pt,label=below left:4] {};
    \node (A5) at (12,6) [circle,fill,inner sep=1.5pt,label=below right:5] {};
    \node (A6) at (13,7) [circle,fill,inner sep=1.5pt,label=above right:6] {};
    \node (A7) at (6,8) [circle,fill,inner sep=1.5pt,label=above left:7] {};
    \node (A8) at (7,9) [circle,fill,inner sep=1.5pt,label=below right:8] {};
    \node (A9) at (5.72,9.5) [circle,fill,inner sep=1.5pt,label=above left:9] {};
    \node (A10) at (8,10) [circle,fill,inner sep=1.5pt,label=above left:10] {};
    \node (A11) at (9,11) [circle,fill,inner sep=1.5pt,label=above right:11] {};
    \node (A12) at (3,12) [circle,fill,inner sep=1.5pt,label=above left:12] {};
    \node (A13) at (4,13) [circle,fill,inner sep=1.5pt,label=below right:13] {};
    \node (A14) at (5,14) [circle,fill,inner sep=1.5pt,label=below right:14] {};
    
    \draw (A1) -- (A6);
    \draw (A2) -- (A3);
    \draw (A3) -- (A5);
    \draw (A5) -- (A6);
    \draw (A10) -- (A11);
    \draw (A2) -- (A11);
    \draw (A7) -- (A8);
    \draw (A8) -- (A10);    
    \draw (A7) -- (A9);
    \draw (A9) -- (A14);
    \draw (A12) -- (A13);
    \draw (A13) -- (A14);
\end{tikzpicture}
\caption{A $3$-right-crucial permutation $R_3$ of length $14$.}
\label{fig:R_3}
\end{figure}

For $k\geq 3$, we define permutations $T_k$ and $N_k$ of lengths $2k-3$ and $2k-1$ by
$T_k=(2k^2-3k+3)(2k^2-3k+4)\ldots (2k^2-k-1)$  and $N_k=31245\ldots (2k-1)$, where 
$5\ldots (2k-1)$ is an increasing sequence of consecutive integers.
For $k\geq 3$ and $i\in [k-1]$, denote $N_{k,i}=N_k+1+(i-1)(2k-1)$.
For $k\geq 3$, we define a permutation $R_k$ of length $(k-1)(2k+1)$ as follows:
$$R_k=T_kN_{k,k-1}N_{k,k-2}\ldots N_{k,1}1$$

\begin{example}\label{Ex:R_3}
For $k=3$ we have $T_3=(12)(13)(14)$, $N_3=31245$, $N_{3,2}=978(10)(11)$, $N_{3,1}=42356$ and 
$R_3=(12)(13)(14)978(10)(11)423561$ (see Figure \ref{fig:R_3}).
\end{example}

\begin{lemma}\label{L:R_k}
The permutation $R_k$ is $k$-right-crucial for all $k\geq 3$. 
\end{lemma}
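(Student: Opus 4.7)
\emph{Proof plan.} The lemma asserts two things: that $R_k$ is $k$-power-free, and that every right-extension of $R_k$ contains a $k$-power. I will handle these parts separately.

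For the extension part, my plan is a case split on whether the appended symbol $x$ satisfies $x>1$ or $x<1$. Suppose first that $x>1$. In concrete values, the last $2k$ symbols of $R_k\cdot x$ are the last $2k-2$ entries of $N_{k,1}$ followed by $1$ and $x$, namely $2,3,5,6,\ldots,2k,1,x$. These split into the $k$ pairs $(2,3),(5,6),(7,8),\ldots,(2k-1,2k),(1,x)$, each an increasing pair of two values (the last because $x>1$); hence the $k$ blocks are mutually order-isomorphic and form a $k$-power of length $2k$. Suppose next that $x<1$. Then $|R_k\cdot x|=k(2k-1)$, and I partition $R_k\cdot x$ into $k$ blocks of length $2k-1$: block $1$ is $T_k$ followed by the first two entries of $N_{k,k-1}$; for $j=2,\ldots,k-1$, block $j$ consists of the last $2k-3$ entries of $N_{k,k-j+1}$ followed by the first two entries of $N_{k,k-j}$; block $k$ consists of the last $2k-3$ entries of $N_{k,1}$ followed by $1$ and $x$. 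Using the explicit formulas for $T_k,N_k$ and the shifts, each such block's values are $2k-3$ consecutive ``top'' entries in increasing order, followed by two strictly smaller entries with the penultimate exceeding the last; a direct rank computation shows the common pattern is $3,4,5,\ldots,2k-1,2,1$, so $R_k\cdot x$ itself is a $k$-power.

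For the $k$-power-freeness of $R_k$, my plan is to analyze the descents. Since $T_k$ is increasing, each $N_{k,i}$ has exactly one internal descent (the ``$3,1$''), and every boundary ($T_k\to N_{k,k-1}$, $N_{k,i+1}\to N_{k,i}$, $N_{k,1}\to 1$) is a descent, the descent positions of $R_k$ are the $k-1$ consecutive pairs $(a_i,a_i+1)$ for $i=0,\ldots,k-2$, where $a_i:=(2k-3)+i(2k-1)$, together with the singleton $a_{k-1}$. Assume for contradiction that a $k$-power $X_1\ldots X_k$ of block length $\ell\ge 2$ exists in $R_k$. Since $k\ell\le|R_k|<k(2k-1)$, we have $\ell\le 2k-2$. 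For $\ell=2$, I would verify that in any proposed $k$-power, the window of length $2k-1$ over the in-block-descent positions contains exactly two descents of $R_k$: either a consecutive pair $(a_i,a_i+1)$ (one of whose offsets is even, ruling out an all-ascending power), or the non-consecutive pair $(a_i+1,a_{i+1})$ at even offsets $0$ and $2k-2$ (again ruling out an all-ascending power); and since no two descents of $R_k$ are at distance $2$, an all-descending $k$-power is also excluded. For $3\le \ell\le 2k-2$, I would combine the descent-location analysis with the fact that the value sets of $T_k,N_{k,k-1},\ldots,N_{k,1}$ and the singleton $\{1\}$ are pairwise disjoint intervals stacked in decreasing order, so that any two order-isomorphic blocks must align identically with this level structure; a case analysis on how many segment boundaries each block can cross then forces a contradiction.

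The main obstacle is the case $3\le\ell\le 2k-2$ of $k$-power-freeness. Descent positions alone do not pin down an order-isomorphism class, so one likely needs a combined argument that simultaneously tracks which value-segment (i.e., which $N_{k,i}$, or $T_k$, or the singleton $1$) each block visits and at which relative positions. The extension part is by contrast a direct structural identification of the correct factor, and I expect it to go through via the calculations sketched above.
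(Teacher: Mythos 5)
The extension part of your proposal is correct and is essentially the paper's argument: the same case split on the appended symbol, the same $k$ increasing pairs when it goes above $1$, and the same partition of the whole extended word into $k$ blocks of length $2k-1$ with common pattern $34\ldots(2k-1)21$ when it goes below $1$. (Minor quibble: the $2k-3$ leading entries of a block are increasing but not consecutive integers, e.g.\ they realize $2,4,5,\ldots,2k-1$ inside a shifted copy of $N_k$; this does not affect the argument.)

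The $k$-power-freeness part, however, has a genuine gap, and you flag it yourself. Your descent enumeration of $R_k$ is correct, and the bound $\ell\le 2k-2$ is correct, but for $\ell\ge 3$ the positions of descents do not determine the order-isomorphism class of a block, so "combine the descent-location analysis with the level structure; a case analysis then forces a contradiction" is a statement of intent, not a proof -- no such case analysis is carried out, and it is exactly where the difficulty lies. Even your $\ell=2$ argument is only sketched (you assert, without verification, that every admissible window of in-block positions meets a descent at an even offset). The paper closes this gap by a different and cleaner mechanism: it counts occurrences in $R_k$ of the length-$3$ patterns $321$, $312$ (exactly $k-1$ each) and $231$ (exactly $k$, with consecutive occurrences at least $2k-4$ symbols apart), and observes that $132$ and $213$ never occur. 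If some block of a $k$-power contained a $321$ or $312$, the power would force at least $k$ occurrences; if a block contained a $231$, the spacing of the $k$ occurrences would force $|X|\ge k(2k-1)>|R_k|$. Hence every block is an increasing run, so no block -- and therefore no $k$-power $X$ -- can contain the first symbol of any $N_{k,i}$ or the final symbol $1$; thus $X$ sits inside a single segment of length at most $2k-1<2k$. Some argument of this kind (or a completed version of your alignment analysis) is needed before the lemma can be considered proved.
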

\begin{proof}
Firstly, we show that any extension of $R_k$ to the right contains a $k$-power.
Suppose that an extension of $R_k$ to the right ends with the symbols $x$ and $y$.
If $x<y$, then its last $2k$ symbols form a $k$-power $X_1\ldots X_k$, 
where $X_i\sim 12$ for all $i\in [k]$.
If $x>y$, then this extension is a $k$-power $Y_1\ldots Y_k$, 
where $Y_i\sim 345\ldots (2k-1)21$ for all $i\in [k]$.

So, it remains to prove that $R_k$ is $k$-power-free.
Analyzing the structure of the permutation $R_k$, we obtain the following:
\begin{itemize}
    \item $R_k$ contains exactly $k-1$ factors that are order-isomorphic to $321$.

    \item $R_k$ contains exactly $k-1$ factors that are order-isomorphic to $312$.

    \item $R_k$ contains exactly $k$ factors that are order-isomorphic to $231$.

    \item $R_k$ does not contain factors that are order-isomorphic to $132$ and $213$.
\end{itemize}
Suppose $R_k$ contains a $k$-power $X=X_1\ldots X_k$.
Let us show that $X_j$ does not contain factors that are order-isomorphic to permutations from the set $\{321,312,231\}$ for any $j\in [k]$.
Let us consider two cases.

Suppose $X_j$ contains a factor that is order-isomorphic to $321$ ($312$) for some $j\in [k]$.
Since $X$ is a $k$-power, $X$ contains at least $k$ factors that are order-isomorphic to $321$ ($312$).
This contradicts the fact that $R_k$ contains exactly $k-1$ factors that are order-isomorphic to $321$ ($312$).

Suppose $X_j$ contains a factor that is order-isomorphic to $231$ for some $j\in [k]$.
Since $X$ is a $k$-power, $X$ contains at least $k$ factors that are order-isomorphic to $231$.
On the other hand, $R_k$ contains exactly $k$ factors that are order-isomorphic to $231$.
Consequently, $X_i$ contains exactly one factor that is order-isomorphic to $231$ for any $i\in [k]$
(we denote such a factor by $U_i$).
Note that $|X_1|=3+d(U_1,U_2)$, where $d(U_1,U_2)$ is the number of symbols of $R_k$ between $U_1$ and $U_2$.
In addition, we have $d(U_1,U_2)\geq 2k-4$.
Therefore, $|X_1|\geq 2k-1$ and $|X|=k|X_1|\geq 2k^2-k$.
Then $|X|>|R_k|$ and we get a contradiction.

As we proved above, any factor of $X_j$ of length $3$ is not order-isomorphic to $321$, $312$ and $231$ for all $j\in [k]$.
In addition, $R_k$ does not contain factors that are order-isomorphic to $132$ and $213$.
Therefore, any factor of $X_j$ of length $3$ is order-isomorphic to $123$ for all $j\in [k]$. 
So, for every $j\in [k]$ the symbols of $X_j$ form an increasing sequence.
This implies that $X_j$ does not contain the first symbols of the permutations $N_{k,k-1},N_{k,k-2},\ldots,N_{k,1}$ for all $j\in [k]$
(otherwise $X_j$ contains two consecutive symbols $x$ and $y$ such that $x>y$).
The same arguments show that $X_j$ does not contain the symbol $1$ for all $j\in [k]$.
Hence, $X$ also does not contain the first symbols of the permutations $N_{k,k-1},N_{k,k-2},\ldots,N_{k,1}$ and 
$X$ does not contain the symbol $1$. 
Therefore, $X$ is a factor of $T_k$ or $X$ is a factor $N_{k,i}$ for some  $i\in [k-1]$.
In both cases, we have $|X|\leq 2k-1$, i.e. $X$ is not a $k$-power. 
\end{proof}

The following two properties of factors of $R_k$ directly follow from the definition of $R_k$.
\begin{lemma}\label{L:FactorsR_k}
Let $k\geq 3$. The following statements hold:
\begin{enumerate}
  \item Any factor of $R_k$ of length $4$ is not order-isomorphic to $1243$.
  
  \item Any factor of $R_k$ of length $3$ is not order-isomorphic to $213$.
\end{enumerate}
\end{lemma}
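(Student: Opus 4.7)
The plan is a case analysis governed by where the factor sits in the block decomposition $R_k = T_k\,N_{k,k-1}\cdots N_{k,1}\,1$, driven by three structural facts. First, $T_k$ is a strictly increasing run of consecutive integers. Second, every $N_{k,i}$ is a shift of $N_k = 3,1,2,4,5,\ldots,(2k-1)$, so its first three symbols are order-isomorphic to $312$, its first four are order-isomorphic to $3124$, and from position $2$ onward the block is strictly increasing. Third, the value ranges stack: every symbol of an earlier block is strictly larger than every symbol of a later block, and the trailing $1$ is the global minimum.

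For the second statement, I would examine a length-$3$ factor $xyz$ of $R_k$ according to whether it lies in a single block or crosses a boundary. Inside $T_k$ it is order-isomorphic to $123$; inside some $N_{k,i}$ it is either the opening triple (order-isomorphic to $312$) or a strictly increasing triple (order-isomorphic to $123$); neither is $213$. If the factor straddles a boundary between an earlier block $A$ and the next block $B$, the third fact forces $z<x$ (regardless of whether one or two of the three symbols lie in $B$), whereas the $213$ pattern requires $z>x$, ruling it out.

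For the first statement I apply the same scheme. A length-$4$ factor inside $T_k$ is order-isomorphic to $1234$, and a length-$4$ factor inside some $N_{k,i}$ is order-isomorphic to $3124$, $1234$, or a strictly increasing sequence, none of which equals $1243$. If the factor crosses a boundary with $j\in\{1,2,3\}$ symbols in the later block, the third fact forces the following: for $j=1$ the last entry is the unique minimum; for $j=2$ the two smallest entries sit at positions $3,4$; and for $j=3$ the first entry is the unique maximum. In $1243$ the minimum lies at position $1$, the two smallest values at positions $1,2$, and the maximum at position $3$, each contradicting the corresponding boundary configuration. I do not anticipate a real obstacle beyond bookkeeping; the one point to be careful about is the rightmost boundary where the later block is the singleton $\{1\}$, which is handled by the same domination argument.
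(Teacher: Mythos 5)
Your proof is correct: the block decomposition $R_k=T_k\,N_{k,k-1}\cdots N_{k,1}\,1$, the internal patterns ($T_k$ increasing; each $N_{k,i}\sim 312 4 5\cdots(2k-1)$, hence increasing from its second symbol on; first triple $\sim 312$, first quadruple $\sim 3124$), and the strict value-dominance of earlier blocks over later ones are all accurate, and the boundary case analysis for $j\in\{1,2,3\}$ symbols in the later block correctly excludes both $213$ and $1243$. The paper gives no argument at all here --- it asserts the lemma ``directly follows from the definition of $R_k$'' --- so your write-up is exactly the routine verification the authors omit, not a different route.
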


\subsection{Long right-crucial $k$-power-free permutations}\label{SubSec:Long}
For $m\geq 1$ and $k\geq 3$, we define a permutation $T_{m,k}$ of length $2k-3$ by 
$$T_{m,k}=(2k^2-3k+3)(2k^2-3k+4+m)(2k^2-3k+5+m)\ldots (2k^2-k-1+m).$$
For $m\geq 1$ and $k\geq 3$, we define a permutation $R_{m,k}$ of length $(k-1)(2k+1)$ as follows:
$$R_{m,k}=T_{m,k}N_{k,k-1}N_{k,k-2}\ldots N_{k,1}1$$

\begin{remark}\label{Remark:1}
Note that $R_{m,k}\sim R_k$.
\end{remark}

\begin{example}\label{Ex:R_{4,3}}
For $m=4$ and $k=3$ we have $T_{4,3}=(12)(17)(18)$, $N_3=31245$, $N_{3,2}=978(10)(11)$, $N_{3,1}=42356$ and 
$R_{4,3}=(12)(17)(18)978(10)(11)423561$.
\end{example}

Let $m\geq 1$ and $k\geq 3$.
Using the high-medium-low construction, we construct a square-free permutation $F_{m,k}$ of length $m+1$ satisfying the following conditions:

(A1) the symbols of $F_{m,k}$ are the numbers from the set $\{2k^2-3k+3,2k^2-3k+4,\ldots,2k^2-3k+3+m\}$,

(A2) the last symbol of $F_{m,k}$ belongs to the lower level,

(A3) the symbols from the upper level form an increasing sequence,

(A4) the symbols from the lower level form a decreasing sequence,

(A5) if $y_1$ and $y_2$ are the penultimate and last symbols of the medium level, then $y_1<y_2$.

\begin{remark}
There are many square-free permutations satisfying (A1)--(A5). 
We choose an arbitrary one of them. 
For example, the permutation $(14)(13)(15)(17)(16)(12)$ satisfies (A1)--(A5) for $m=5$ and $k=3$.

\end{remark}

The prefix of $F_{m,k}$ of length $m$ is denoted by $F'_{m,k}$.
In what follows, we will need the following properties of factors of $R_{m,k}$ and $F'_{m,k}$. 

\begin{lemma}\label{L:BasicPrR_m,k}
 Let $m\geq 1$ and $k\geq 3$. The following statements hold:
\begin{enumerate}
 
  \item Any factor of $R_{m,k}$ of length $4$ is not order-isomorphic to $1243$.
  
  \item Any factor of $R_{m,k}$ of length $3$ is not order-isomorphic to $213$.
  
  \item The permutation $R_{m,k}$ is $k$-right-crucial.
  
  \item The permutation $F'_{m,k}$ is square-free.

\end{enumerate}
\end{lemma}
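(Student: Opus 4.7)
The plan is to observe that three of the four claims reduce immediately to facts already established for $R_k$ via Remark \ref{Remark:1}, and the fourth is an easy consequence of the construction of $F_{m,k}$.

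First, the key tool for items 1--3 is that $R_{m,k} \sim R_k$. Order-isomorphism sends factors of length $\ell$ bijectively to factors of length $\ell$ while preserving their order-isomorphism type. So if some factor of $R_{m,k}$ of length $4$ were order-isomorphic to $1243$, the corresponding factor of $R_k$ would also be order-isomorphic to $1243$, contradicting Lemma \ref{L:FactorsR_k}(1); item 1 follows, and item 2 is identical with $1243$ replaced by $213$ and Lemma \ref{L:FactorsR_k}(1) replaced by Lemma \ref{L:FactorsR_k}(2). For item 3, Lemma \ref{L:R_k} asserts that $R_k$ is $k$-right-crucial, and Proposition \ref{Prop:Crucial}(1) tells us that $k$-right-cruciality is preserved under $\sim$; since $R_{m,k} \sim R_k$, this gives item 3.

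For item 4, recall that $F_{m,k}$ was constructed by the high-medium-low construction and is therefore square-free. Since $F'_{m,k}$ is a prefix of $F_{m,k}$, it is in particular a factor of $F_{m,k}$. Any factor of a square-free permutation is square-free, because a square contained in the factor would also be a square contained in the whole permutation. Hence $F'_{m,k}$ is square-free.

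There is no serious obstacle here: everything is a bookkeeping consequence of Remark \ref{Remark:1}, Lemma \ref{L:R_k}, Lemma \ref{L:FactorsR_k}, Proposition \ref{Prop:Crucial}(1), and the construction of $F_{m,k}$. The only point worth double-checking is that the ``factors correspond under $\sim$'' argument really is immediate from the definition of order-isomorphism, and that no property of $F_{m,k}$ beyond being square-free is needed for item 4.
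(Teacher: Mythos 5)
Your proposal is correct and follows exactly the paper's own argument: items 1--3 are deduced from $R_{m,k}\sim R_k$ together with Lemma \ref{L:FactorsR_k}, Lemma \ref{L:R_k} and Proposition \ref{Prop:Crucial}, and item 4 from the square-freeness of $F_{m,k}$ and the fact that $F'_{m,k}$ is a factor of it. You simply spell out the bookkeeping a bit more explicitly than the paper does.
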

\begin{proof}

1, 2. It follows from Remark \ref{Remark:1} and Lemma \ref{L:FactorsR_k}.

3. It follows from Remark \ref{Remark:1}, Lemma \ref{L:R_k} and Proposition \ref{Prop:Crucial}.

4. Since $F_{m,k}$ is square-free and $F'_{m,k}$ is a factor of $F_{m,k}$, $F'_{m,k}$ is also square-free.
\end{proof}

Finally, for $m\geq 1$ and $k\geq 3$, we define a permutation $\widetilde{R}_{m,k}$ of length $m+(k-1)(2k+1)$ by
$\widetilde{R}_{m,k}=F'_{m,k}R_{m,k}$.
In Section \ref{Sec:Crucial}, we will show that $\widetilde{R}_{m,k}$ is $k$-right-crucial.

\section{Constructions of bicrucial $k$-power-free permutations}\label{Sec:ConstrBicrucial}
In this section, we give constructions of $k$-bicrucial permutations.

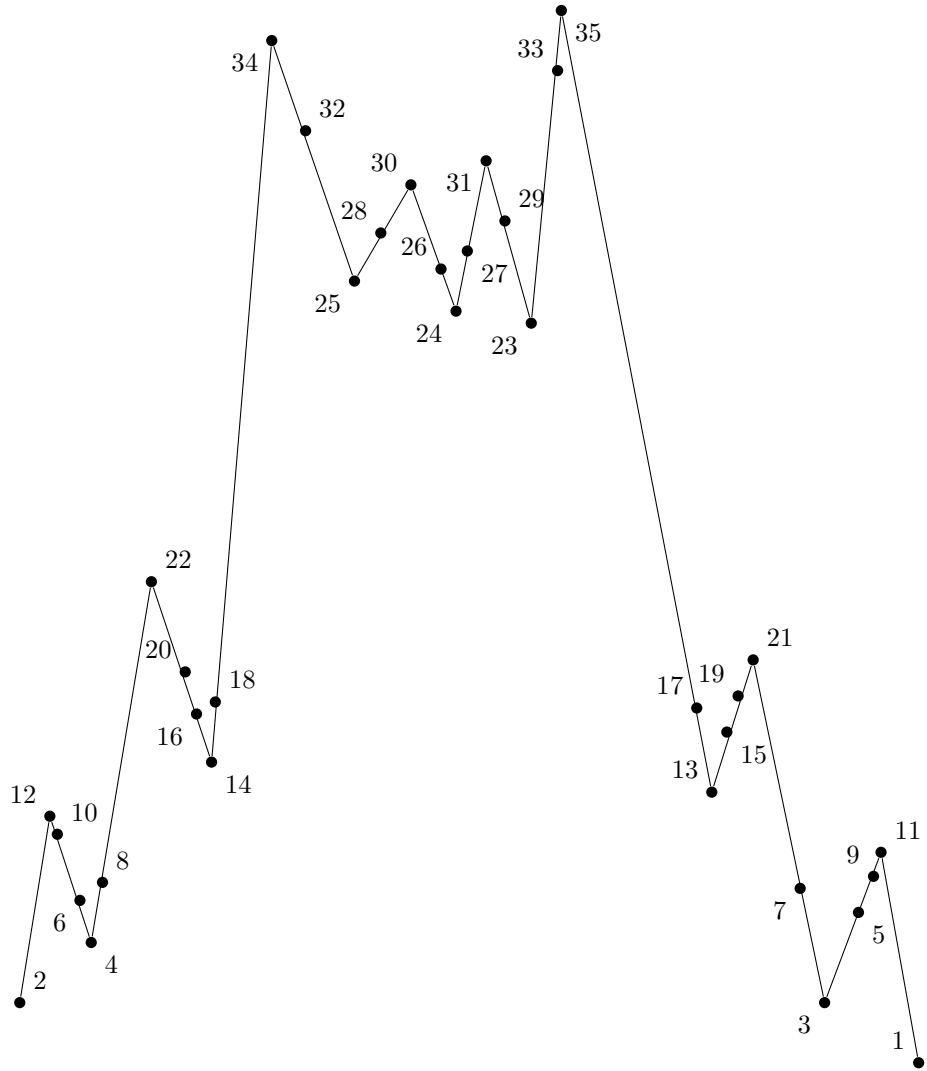
\begin{figure} 
\begin{tikzpicture}[xscale=0.5, yscale=0.8] %
\node (A1) at (23.5,1.5) [circle,fill,inner sep=1.5pt,label=above left:1,label distance=2mm] {};
\node (A2) at (-0.4,2.5) [circle,fill,inner sep=1.5pt,label=above right:2,label distance=2mm] {};
\node (A3) at (21,2.5) [circle,fill,inner sep=1.5pt,label=below left:3,label distance=2mm] {};
\node (A4) at (1.5,3.5) [circle,fill,inner sep=1.5pt,label=below right:4,label distance=2mm] {};
\node (A5) at (21.9,4.0) [circle,fill,inner sep=1.5pt,label=below right:5,label distance=2mm] {};
\node (A6) at (1.2,4.2) [circle,fill,inner sep=1.5pt,label=below left:6,label distance=2mm] {};
\node (A7) at (20.35,4.4) [circle,fill,inner sep=1.5pt,label=below left:7,label distance=2mm] {};
\node (A8) at (1.8,4.5) [circle,fill,inner sep=1.5pt,label=above right:8,label distance=2mm] {};
\node (A9) at (22.3,4.6) [circle,fill,inner sep=1.5pt,label=above left:9,label distance=3mm] {};
\node (A10) at (0.60,5.3) [circle,fill,inner sep=1.5pt,label=above right:10,label distance=2mm] {};
\node (A11) at (22.5,5) [circle,fill,inner sep=1.5pt,label=above right:11,label distance=3mm] {};
\node (A12) at (0.4, 5.6) [circle,fill,inner sep=1.5pt,label=above left:12,label distance=2mm] {};
\node (A13) at (18,6) [circle,fill,inner sep=1.5pt,label=above left:13,label distance=2mm] {};
\node (A14) at (4.7,6.5) [circle,fill,inner sep=1.5pt,label=below right:14,label distance=2mm] {};
\node (A15) at (18.4,7) [circle,fill,inner sep=1.5pt,label=below right:15,label distance=3mm] {};
\node (A16) at (4.3,7.3) [circle,fill,inner sep=1.5pt,label=below left:16,label distance=2mm] {};
\node (A17) at (17.6,7.4) [circle,fill,inner sep=1.5pt,label=above left:17,label distance=2mm] {};
\node (A18) at (4.8,7.5) [circle,fill,inner sep=1.5pt,label=above right:18,label distance=2mm] {};
\node (A19) at (18.7,7.6) [circle,fill,inner sep=1.5pt,label=above left:19,label distance=3mm] {};
\node (A20) at (4.0,8.0) [circle,fill,inner sep=1.5pt,label=above left:20,label distance=2mm] {};
\node (A21) at (19.1,8.2) [circle,fill,inner sep=1.5pt,label=above right:21,label distance=3mm] {};
\node (A22) at (3.1,9.5) [circle,fill,inner sep=1.5pt,label=above right:22,label distance=2mm] {};
\node (A23) at (13.2,13.8) [circle,fill,inner sep=1.5pt,label=below left:23,label distance=3mm] {};
\node (A24) at (11.2,14) [circle,fill,inner sep=1.5pt,label=below left:24,label distance=3mm] {};
\node (A25) at (8.5,14.5) [circle,fill,inner sep=1.5pt,label=below left:25,label distance=3mm] {};
\node (A26) at (10.8,14.7) [circle,fill,inner sep=1.5pt,label=above left:26,label distance=2mm] {};
\node (A27) at (11.5,15) [circle,fill,inner sep=1.5pt,label=below right:27,label distance=2mm] {};
\node (A28) at (9.2,15.3) [circle,fill,inner sep=1.5pt,label=above left:28,label distance=2mm] {};
\node (A29) at (12.5,15.5) [circle,fill,inner sep=1.5pt,label=above right:29,label distance=2mm] {};
\node (A30) at (10,16.1) [circle,fill,inner sep=1.5pt,label=above left:30,label distance=3mm] {};
\node (A31) at (12,16.5) [circle,fill,inner sep=1.5pt,label=below left:31,label distance=3mm] {};
\node (A32) at (7.2,17) [circle,fill,inner sep=1.5pt,label=above right:32,label distance=2mm] {};
\node (A33) at (13.9,18) [circle,fill,inner sep=1.5pt,label=above left:33,label distance=2mm] {};
\node (A34) at (6.3,18.5) [circle,fill,inner sep=1.5pt,label=below left:34,label distance=3mm] {};
\node (A35) at (14,19) [circle,fill,inner sep=1.5pt,label=below right:35,label distance=3mm] {};

\draw (A2) -- (A12);
\draw (A12) -- (A4);
\draw (A4) -- (A22);
\draw (A22) -- (A14);
\draw (A14) -- (A34);
\draw (A34) -- (A25);
\draw (A25) -- (A30);
\draw (A30) -- (A24);
\draw (A24) -- (A31);
\draw (A31) -- (A23);
\draw (A23) -- (A35);
\draw (A35) -- (A13);
\draw (A13) -- (A21);
\draw (A21) -- (A3);
\draw (A3) -- (A11);
\draw (A11) -- (A1);
\end{tikzpicture}
\caption{A 3-bicrucial permutation $B_{1,3}$ of length $35$ corresponding to the permutation 
$H_{1,3}=(25)(28)(30)(26)(24)(27)(31)(29)(23)$ from Remark \ref{Rem:H_1,3}.}
\label{fig:B_{m,k}}
\end{figure}

For $m\geq 1$ and $k\geq 3$, we define permutations $Q_{m,k}$ and $W_{m,k}$ of length $2k-4$ as follows:
$$Q_{m,k}=(4k^2-6k+8m+4k-4)(4k^2-6k+8m+4k-6)\ldots (4k^2-6k+8m+6)$$ and

$$W_{m,k}=(4k^2-6k+8m+7)(4k^2-6k+8m+9)\ldots (4k^2-6k+8m+4k-3).$$

For $m\geq 1$ and $k\geq 3$, we define permutations $P_{m,k}$ and $S_{m,k}$ of length $(k-1)(2k+1)$ as follows:
$$P_{m,k}=2(2\cdot\mathrm{rev}(N_{k,1}))(2\cdot\mathrm{rev}(N_{k,2}))\ldots (2\cdot\mathrm{rev}(N_{k,k-1}))Q_{m,k}(4k^2-6k+5+2m)$$ and

$$S_{m,k}=(4k^2-6k+5)W_{m,k}(2\cdot N_{k,k-1}-1)(2\cdot N_{k,k-2}-1)\ldots (2\cdot N_{k,1}-1)1$$

\begin{remark}\label{Remark:S_m,k}
Note that $S_{m,k}\sim R_k$ and $P_{m,k}\sim \mathrm{rev}(R_k)$.
\end{remark}

\begin{example}
For $m=1$ and $k=3$ we have $Q_{1,3}=(34)(32)$, $W_{1,3}=(33)(35)$, $P_{1,3}=2(12)(10)648(22)(20)(16)(14)(18)(34)(32)(25)$ and 
$S_{1,3}=(23)(33)(35)(17)(13)(15)(19)(21)7359(11)1$ (see Figure \ref{fig:B_{m,k}}).
\end{example}

Let $m\geq 1$ and $k\geq 3$.
Using the high-medium-low construction, we construct a square-free permutation $H_{m,k}$ of length $8m+1$ satisfying the following conditions:

(B1) the symbols of $H_{m,k}$ are the numbers from the set $\{4k^2-6k+5,4k^2-6k+6,\ldots,4k^2-6k+5+8m\}$, 
  
  
(B2) the first and last symbols of $H_{m,k}$ belong to the lower level,
  
(B3) the symbols from the upper level form an increasing sequence,

(B4) the symbols from the lower level form a decreasing sequence,
  

(B5) if $y_1$ and $y_2$ are the penultimate and last symbols of the medium level, then $y_1<y_2$.

\begin{remark}\label{Rem:H_1,3}
There are many square-free permutations satisfying (B1)--(B5). 
We choose an arbitrary one of them. 
For example, the permutation $(25)(28)(30)(26)(24)(27)(31)(29)(23)$ satisfies conditions (B1)--(B5) for $m=1$ and $k=3$ (see Figure \ref{fig:B_{m,k}}).
\end{remark}

\begin{remark}
Since $|H_{m,k}|=8m+1$ and $H_{m,k}$ ends on the lower level, the medium level of $H_{m,k}$ consists of $4m$ symbols.
We denote these symbols by $x_1,x_2,\ldots,x_{4m}$. We claim that $x_1>x_2$.
Taking into account (\ref{Eq:1}) and (B5), we get that $4m=i+4t+1$ or $4m=i+4t+2$. Therefore, $i=3$ or $i=2$.
In both cases, we have $x_1>x_2$ due to (\ref{Eq:1}).
\end{remark}

Let $H''_{m,k}$ be the permutation obtained from $H_{m,k}$ by deleting the first and last symbols.
In what follows, we will need the following result.

\begin{lemma}\label{L:BasicPrBicrucial}
 Let $m\geq 1$ and $k\geq 3$. The following statements hold:
\begin{enumerate}
 
  \item Any factor of $S_{m,k}$ of length $4$ is not order-isomorphic to $1243$.
  
  \item The permutation $S_{m,k}$ is $k$-right-crucial.
  
  \item The permutation $P_{m,k}$ is $k$-left-crucial.
  
  
   \item The permutation $H''_{m,k}$ is square-free.

   \item Any factor of $H''_{m,k}$ of length $4$ is not order-isomorphic to $2341$ and $3214$.

\end{enumerate}
\end{lemma}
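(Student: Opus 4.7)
The plan is to dispatch the five items in turn, each one reducing to a previously established result either via the order-isomorphism identifications of Remark \ref{Remark:S_m,k} or via the defining properties of the high-medium-low construction used to produce $H_{m,k}$. Nothing new needs to be proved from scratch; the work is almost entirely bookkeeping.

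First I would handle items 1 and 2 together. By Remark \ref{Remark:S_m,k} we have $S_{m,k}\sim R_k$, and order-isomorphism preserves the set of factor patterns of each given length, so item 1 of Lemma \ref{L:FactorsR_k} transfers directly to $S_{m,k}$ and gives item 1 here. For item 2, Lemma \ref{L:R_k} states that $R_k$ is $k$-right-crucial, and then Proposition \ref{Prop:Crucial}(1) applied to the pair $(R_k,S_{m,k})$ yields that $S_{m,k}$ is $k$-right-crucial as well.

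For item 3 I would again use Remark \ref{Remark:S_m,k} to get $P_{m,k}\sim \mathrm{rev}(R_k)$. Combining Lemma \ref{L:R_k} with Proposition \ref{Prop:Crucial}(2) shows that $\mathrm{rev}(R_k)$ is $k$-left-crucial, and then the symmetric left-sided analogue of Proposition \ref{Prop:Crucial}(1) (which holds by the same one-line argument, since the definition of $k$-left-crucial depends only on the order-isomorphism class) transfers the property to $P_{m,k}$. For items 4 and 5 I would use that $H_{m,k}$ is produced by the high-medium-low construction, hence square-free, and that $H''_{m,k}$ is a factor of $H_{m,k}$: any factor of a square-free permutation is square-free, giving item 4, and any length-$4$ factor of $H''_{m,k}$ is a length-$4$ factor of $H_{m,k}$, so Proposition \ref{Prop:2341} forbids the patterns $2341$ and $3214$, giving item 5.

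The hard part will be purely organizational rather than mathematical: making sure each cited hypothesis matches exactly, particularly the implicit left-crucial analogue of Proposition \ref{Prop:Crucial}(1), and checking that the two reductions $S_{m,k}\sim R_k$ and $P_{m,k}\sim \mathrm{rev}(R_k)$ propagate the intended property (containment of a $k$-power in an extension) correctly under order-isomorphism. No new combinatorial analysis of $S_{m,k}$, $P_{m,k}$, or $H''_{m,k}$ is needed.
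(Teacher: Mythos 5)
Your proposal is correct and follows essentially the same route as the paper: items 1--3 by transferring properties of $R_k$ through the order-isomorphisms of Remark \ref{Remark:S_m,k} via Lemma \ref{L:FactorsR_k}, Lemma \ref{L:R_k} and Proposition \ref{Prop:Crucial}, item 4 from $H''_{m,k}$ being a factor of the square-free $H_{m,k}$, and item 5 from Proposition \ref{Prop:2341}. Your explicit remark that the left-crucial analogue of Proposition \ref{Prop:Crucial}(1) is needed for item 3 is a point the paper leaves implicit, but it holds by the same one-line argument.
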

\begin{proof}
1. It follows from Remark \ref{Remark:S_m,k} and Lemma \ref{L:FactorsR_k}.

2, 3. It follows from Remark \ref{Remark:S_m,k}, Lemma \ref{L:R_k} and Proposition \ref{Prop:Crucial}.

4. Since $H_{m,k}$ is square-free and $H''_{m,k}$ is a factor of $H_{m,k}$, $H''_{m,k}$ is also square-free.

5. It follows from Proposition \ref{Prop:2341}.
\end{proof}

Finally, for $m\geq 1$ and $k\geq 3$, we define a permutation $B_{m,k}$ of length $8m-1+2(k-1)(2k+1)$ by
$B_{m,k}=P_{m,k}H''_{m,k}S_{m,k}$.
In Section \ref{Sec:Bicrucial}, we will show that $B_{m,k}$ is $k$-bicrucial.

\section{Right-crucial $k$-power-free permutations}\label{Sec:Crucial}
In this section, we prove that for every $k\geq 3$ there exist $k$-right-crucial permutations of any length at least $(k-1)(2k+1)$.
Firstly, we prove the following result.

\begin{lemma}\label{L:V1V2-1}
Let $V_1V_2$ be a factor of $\widetilde{R}_{m,k}$, where $|V_1|=|V_2|\geq 2$.
If $V_1$ contains the last symbol of $F'_{m,k}$, then $V_1$ and $V_2$ are not order-isomorphic.
\end{lemma}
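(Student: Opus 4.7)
The plan is to split on where $V_1$ ends relative to the seam between $F'_{m,k}$ and $R_{m,k}$ in $\widetilde{R}_{m,k}=F'_{m,k}R_{m,k}$, and in each subcase to exhibit a short forbidden pattern inside $V_1$ that, under $V_1\sim V_2$, would force $V_2$ (which lies inside $R_{m,k}$ or is a prefix of it) to contain a pattern ruled out by Lemma~\ref{L:BasicPrR_m,k}.

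First I would work out the structure near the seam. By (A2) the last symbol of $F_{m,k}$ lies on the lower level, so positions $m-3,m-2,m-1,m$ of $F_{m,k}$ cycle through the levels L, M, U, M. Combining (S2) with (A5) gives $v_L<v_{M_1}<v_{M_2}<v_U$ for the four values at these positions, so $F'_{m,k}[m-3..m]\sim 1243$. Moreover $R_{m,k}[1]=2k^2-3k+3$ is strictly less than every symbol of $F'_{m,k}$, while $R_{m,k}[2]=2k^2-3k+4+m$ is strictly greater; hence the three consecutive symbols $\widetilde{R}_{m,k}[m..m+2]$ satisfy $2k^2-3k+3<v_{M_2}<2k^2-3k+4+m$, giving $\widetilde{R}_{m,k}[m..m+2]\sim 213$.

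Next, let $\ell=|V_1|$ and let $s$ be the starting index of $V_1$. In Case~(A), $V_1\subseteq F'_{m,k}$ and ends at position $m$. If $\ell\geq 4$, the $1243$-factor $F'_{m,k}[m-3..m]$ occupies $V_1[\ell-3..\ell]$, so $V_1\sim V_2$ would place a $1243$-factor at $R_{m,k}[\ell-3..\ell]$, contradicting Lemma~\ref{L:BasicPrR_m,k}.1; for $\ell\in\{2,3\}$ the patterns $21$ or $132$ of $V_1$'s last symbols disagree with the strictly increasing prefix $R_{m,k}[1..\ell]$. In Case~(B), $V_1$ extends to position $\geq m+2$, so the $213$-factor $\widetilde{R}_{m,k}[m..m+2]$ lies inside $V_1$; under $V_1\sim V_2$ the corresponding triple is $R_{m,k}[\ell..\ell+2]$, contradicting Lemma~\ref{L:BasicPrR_m,k}.2. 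In Case~(C), $V_1$ ends exactly at $m+1$, so $V_1=F'_{m,k}[s..m]\cdot(2k^2-3k+3)$ and $V_2=R_{m,k}[2..\ell+1]$; for $\ell\geq 5$ the $1243$-factor $F'_{m,k}[m-3..m]$ still sits inside $V_1$ (at positions $\ell-4,\dots,\ell-1$), and Lemma~\ref{L:BasicPrR_m,k}.1 again prevents the match.

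The main obstacle is Case~(C) with $\ell\in\{2,3,4\}$, where the $1243$-factor no longer fits in $V_1$. I would finish by direct computation: in this regime $V_1$ ends in its strict minimum $2k^2-3k+3$, so $V_1$ has pattern $21$, $321$, or $2431$ respectively; meanwhile $V_2=R_{m,k}[2..\ell+1]$ starts at $T_{m,k}[2]$, and for $k\geq 4$ lies inside the strictly increasing block $T_{m,k}$ (patterns $12$, $123$, $1234$), while for $k=3$ it crosses into $N_{k,k-1}$ after position~$3$ (patterns $12$, $231$, $3421$). None of these matches the corresponding $V_1$-pattern, completing the proof.
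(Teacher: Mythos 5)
Your proposal is correct and follows essentially the same route as the paper's proof: it splits on how $V_1$ straddles the seam (equivalently, on the lengths of the prefix of $V_1$ ending at the last symbol of $F'_{m,k}$ and the remaining suffix), kills the long cases with the forbidden patterns $1243$ and $213$ from Lemma~\ref{L:BasicPrR_m,k}, and finishes the short cases by direct pattern comparison, including the same $k=3$ versus $k>3$ split. The only difference is cosmetic — you additionally spell out why the last four symbols of $F'_{m,k}$ form a $1243$ pattern, which the paper leaves implicit.
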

\begin{proof}
Let $y$ be the last symbol of $F'_{m,k}$.
Let $V'_1$ be the prefix of $V_1$ ending with the symbol $y$, 
and let $V''_1$ be the suffix of $V_1$ obtained from $V_1$ by removing the prefix $V'_1$.

Suppose that $|V'_1|\geq 4$.
The suffix of $V'_1$ of length $4$ is order-isomorphic to $1243$.
On the other hand, $V_2$ is a factor $R_{m,k}$. Then, by Lemma \ref{L:BasicPrR_m,k}, any factor of $V_2$ of length $4$ is not order-isomorphic to $1243$.
Consequently, in this case $V_1\not\sim V_2$.

Suppose that $|V''_1|\geq 2$. Let $U$ be the factor of $V_1$ consisting of $y$ and the first two symbols of $V''_1$.
Note that $U\sim 213$. On the other hand, $V_2$ is a factor $R_{m,k}$. Then, by Lemma \ref{L:BasicPrR_m,k}, any factor of $V_2$ of length $3$ is not order-isomorphic to $213$.
Consequently, in this case $V_1\not\sim V_2$.

So, we can assume that $|V'_1|\leq 3$ and $|V''_1|\leq 1$. Therefore, it remains to consider the following five cases.

\textbf{Case $1$: $|V'_1|=3$ and $|V''_1|=1$.} 
In this case, we have $V_1\sim 2431$. On the other hand, $V_2\sim 1234$ for $k>3$ and $V_2\sim 3421$ for $k=3$.
Hence, $V_1\not\sim V_2$.

\textbf{Case $2$: $|V'_1|=3$ and $|V''_1|=0$.} 
In this case, $V_1\sim 132$ and $V_2\sim 123$, that is, $V_1\not\sim V_2$.

\textbf{Case $3$: $|V'_1|=2$ and $|V''_1|=1$.} 
In this case, we have $V_1\sim 321$. On the other hand, $V_2\sim 123$ for $k>3$ and $V_2\sim 231$ for $k=3$.
Hence, $V_1\not\sim V_2$.

\textbf{Case $4$: $|V'_1|=2$ and $|V''_1|=0$.} 
In this case, $V_1\sim 21$ and $V_2\sim 12$, that is, $V_1\not\sim V_2$.

\textbf{Case $5$: $|V'_1|=1$ and $|V''_1|=1$.} 
In this case, $V_1\sim 21$ and $V_2\sim 12$, that is, $V_1\not\sim V_2$.

\end{proof}

\begin{remark}\label{Remark:V1V2}
Note that in the statement of Lemma \ref{L:V1V2-1} we can replace $\widetilde{R}_{m,k}$ with $H''_{m,k}S_{m,k}$ and $F'_{m,k}$ with $H''_{m,k}$.  
Namely, the following statement holds:
\begin{itemize}
  \item Let $V_1V_2$ be a factor of $H''_{m,k}S_{m,k}$, where $|V_1|=|V_2|\geq 2$.
If $V_1$ contains the last symbol of $H''_{m,k}$, then $V_1$ and $V_2$ are not order-isomorphic.
\end{itemize}
The proof of this fact is similar to the proof of Lemma \ref{L:V1V2-1}.
\end{remark}

The main result of this section is the following.

\begin{theorem}\label{Th:1}
The permutation $\widetilde{R}_{m,k}$ is $k$-right-crucial for all $m\geq 1$ and $k\geq 3$.
\end{theorem}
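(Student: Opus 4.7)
The plan is to verify the two defining conditions of $k$-right-cruciality for $\widetilde{R}_{m,k}=F'_{m,k}R_{m,k}$: namely, that $\widetilde{R}_{m,k}$ is $k$-power-free and that every right extension contains a $k$-power. The second condition is essentially immediate. Since $R_{m,k}$ is a suffix of $\widetilde{R}_{m,k}$ and is itself $k$-right-crucial by Lemma \ref{L:BasicPrR_m,k}(3), every right extension of $R_{m,k}$ contains a $k$-power, and Proposition \ref{Prop:Extension}(1) transfers this property to $\widetilde{R}_{m,k}$. Thus the entire content of the proof is in the $k$-power-freeness.

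For $k$-power-freeness I would argue by contradiction: assume $\widetilde{R}_{m,k}$ contains a $k$-power $X=X_1X_2\cdots X_k$ and split into cases based on how $X$ sits relative to the boundary between $F'_{m,k}$ and $R_{m,k}$. If $X$ lies entirely inside $F'_{m,k}$, then the factor $X_1X_2$ is a square of length $2|X_1|\geq 4$, contradicting that $F'_{m,k}$ is square-free by Lemma \ref{L:BasicPrR_m,k}(4). If $X$ lies entirely inside $R_{m,k}$, we directly contradict Lemma \ref{L:BasicPrR_m,k}(3). Otherwise $X$ contains both the last symbol $y$ of $F'_{m,k}$ and the first symbol of $R_{m,k}$; let $j\in[k]$ be the unique index with $y\in X_j$.

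The straddling case splits further. If $j<k$, then $X_jX_{j+1}$ is a factor of $\widetilde{R}_{m,k}$ with $|X_j|=|X_{j+1}|\geq 2$ and $X_j$ containing $y$, so Lemma \ref{L:V1V2-1} yields $X_j\not\sim X_{j+1}$, contradicting that all blocks of a $k$-power are pairwise order-isomorphic. If $j=k$, then $X_1,\ldots,X_{k-1}$ all lie inside $F'_{m,k}$, and because $k\geq 3$ forces $k-1\geq 2$, the prefix $X_1X_2$ is a square inside the square-free permutation $F'_{m,k}$, again a contradiction.

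The main obstacle is the subcase $j<k$ in the straddling configuration, and this is precisely what Lemma \ref{L:V1V2-1} was designed to handle, exploiting the forbidden patterns $1243$ and $213$ in $R_{m,k}$ guaranteed by Lemma \ref{L:BasicPrR_m,k}(1)--(2) together with the identity $y$ of the terminal medium-level symbol of $F'_{m,k}$. With that lemma in hand, the remaining cases collapse cleanly onto the square-freeness of $F'_{m,k}$ and the $k$-right-cruciality of $R_{m,k}$, and the theorem follows.
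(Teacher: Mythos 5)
Your proposal is correct and follows essentially the same route as the paper: right-extensions are handled via Proposition \ref{Prop:Extension} and the cruciality of the suffix $R_{m,k}$, and $k$-power-freeness is reduced to Lemma \ref{L:V1V2-1} for the block containing the last symbol of $F'_{m,k}$, with the remaining cases falling to the square-freeness of $F'_{m,k}$ and the $k$-power-freeness of $R_{m,k}$. The only differences are cosmetic (your explicit three-way case split, and using $X_1X_2$ where the paper uses $X_{k-2}X_{k-1}$ in the $j=k$ subcase).
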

\begin{proof}
Lemma \ref{L:BasicPrR_m,k} implies that any extension of $R_{m,k}$ to the right contains a $k$-power.
In addition, $R_{m,k}$ is a suffix of $\widetilde{R}_{m,k}$. 
Therefore, any extension of $\widetilde{R}_{m,k}$ to the right also contains a $k$-power due to Proposition \ref{Prop:Extension}.

So, it remains to prove that $\widetilde{R}_{m,k}$ is $k$-power-free.
Suppose $\widetilde{R}_{m,k}$ contains a $k$-power $X=X_1\ldots X_k$.
Let $y$ be the last symbol of $F'_{m,k}$.
By Lemma \ref{L:BasicPrR_m,k}, $F'_{m,k}$ is square-free and $R_{m,k}$ is $k$-power-free.
This implies that $X$ contains the symbol $y$.
Therefore, $X_j$ contains $y$ for some $j\in [k]$.
Let us show that $j=k$. If $j<k$, then $X_{j}X_{j+1}$ is a factor of $\widetilde{R}_{m,k}$.
Applying Lemma \ref{L:V1V2-1} for $V_1=X_j$ and $V_2=X_{j+1}$, we obtain that $X_j$ and $X_{j+1}$ are not order-isomorphic.
Consequently, in this case $X$ is not a $k$-power. So, we have $j=k$.
Since $j=k$ and $k\geq 3$, $X_{k-2}X_{k-1}$ is a factor of $F'_{m,k}$.
This contradicts the fact that $F'_{m,k}$ is square-free.
Thus, $\widetilde{R}_{m,k}$ is $k$-power-free.

\end{proof}

\begin{corollary}
For every $k\geq 3$, there exist $k$-right-crucial permutations of any length at least $(k-1)(2k+1)$.
\end{corollary}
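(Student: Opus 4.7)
The plan is to read off the corollary as an immediate consequence of the two constructions already established in the paper, namely $R_k$ and $\widetilde{R}_{m,k}$, whose lengths together cover every integer $n \geq (k-1)(2k+1)$.

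First I would handle the base case $n = (k-1)(2k+1)$: Lemma \ref{L:R_k} states precisely that $R_k$ is a $k$-right-crucial permutation of this length, so there is nothing further to check. Next I would handle every length $n > (k-1)(2k+1)$ by setting $m = n - (k-1)(2k+1)$, which is a positive integer, and applying Theorem \ref{Th:1} to obtain the $k$-right-crucial permutation $\widetilde{R}_{m,k}$ of length $m + (k-1)(2k+1) = n$.

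Since these two cases exhaust all integers $n \geq (k-1)(2k+1)$ and each case produces a $k$-right-crucial permutation of the desired length, the corollary follows. There is essentially no obstacle here: all the real work has already been done in Lemma \ref{L:R_k} and Theorem \ref{Th:1}, and the corollary is simply a repackaging that matches the available lengths against the two construction families.
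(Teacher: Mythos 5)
Your proposal is correct and is exactly the argument the paper leaves implicit: the base length $(k-1)(2k+1)$ is covered by Lemma \ref{L:R_k} (the permutation $R_k$), and every longer length $n$ is covered by Theorem \ref{Th:1} with $m = n - (k-1)(2k+1) \geq 1$ (the permutation $\widetilde{R}_{m,k}$). Nothing further is needed.
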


\begin{remark}\label{Remark:P_m,kH_m,kS_m,k}
According to Remark \ref{Remark:V1V2}, the proof of Theorem \ref{Th:1} also works for the permutation $H''_{m,k}S_{m,k}$.
So, we have the following:
\begin{itemize}
  \item The permutation $H''_{m,k}S_{m,k}$ is $k$-right-crucial for all $m\geq 1$ and $k\geq 3$.

\end{itemize}

This result immediately implies the following:

\begin{itemize}
  \item The permutation $P_{m,k}H''_{m,k}$ is $k$-left-crucial for all $m\geq 1$ and $k\geq 3$.
\end{itemize}

\end{remark}

\section{Bicrucial $k$-power-free permutations}\label{Sec:Bicrucial}
In this section, we prove that for every $k\geq 3$ there exist arbitrarily long $k$-bicrucial permutations.
Firstly, we prove two technical lemmas.

\begin{lemma}\label{L:V1V2-2}
Let $V_1V_2$ be a factor of $H''_{m,3}S_{m,3}$, where $|V_1|=|V_2|\geq 2$,
and let $V_2$ contain the last symbol of $H''_{m,3}$. If $V_1$ and $V_2$ are order-isomorphic, 
then $|V_2|=4$ and $V_2$ starts with the last symbol of $H''_{m,3}$.
\end{lemma}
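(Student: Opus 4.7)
Write $V_2 = V_2' V_2''$, where $V_2'$ is the prefix of $V_2$ ending at the last symbol $z$ of $H''_{m,3}$ and $V_2''$ is the (possibly empty) suffix lying in $S_{m,3}$. Since $V_2$ contains $z$ we have $|V_2'| \ge 1$, and $V_1$ sits entirely in $H''_{m,3}$. I plan to case-analyze on the pair $(|V_2'|, |V_2''|)$, showing that every pair except $(1, 3)$ forces $V_1 \not\sim V_2$; the pair $(1, 3)$ corresponds exactly to the asserted conclusion $|V_2| = 4$ with $V_2$ starting at $z$.

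\textbf{Quick cases.} Several cases are handled using Lemma \ref{L:BasicPrBicrucial}. If $|V_2''| = 0$, then $V_1 V_2 \subseteq H''_{m,3}$, contradicting part (4). If $|V_2'| \ge 2$ and $|V_2''| \ge 2$, the length-$4$ window $V_2[|V_2'|-1 : |V_2'|+2] = (u_{\mathrm{last}}, x_{4m}, 23, 25+8m)$ straddling the boundary is, using the value ranges of the three levels from the high-medium-low construction together with (B3), order-isomorphic to $3214$; the matching length-$4$ factor of $V_1$ lies in $H''_{m,3}$, and so cannot be $\sim 3214$ by part (5). If $|V_2'| = 1$ and $|V_2''| \ge 4$, the window $V_2[2:5] = (23, 25+8m, 27+8m, 17)$ is $\sim 2341$, again ruled out by part (5). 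The small cases $|V_2'| = 1$, $|V_2''| \in \{1, 2\}$ are handled directly: $V_2 \sim 21$ or $\sim 213$, while the last two or three symbols of $H''_{m,3}$ making up $V_1$ have level pattern $(M, U)$ or $(L, M, U)$ by (\ref{Eq:1}), so $V_1 \sim 12$ or $\sim 123$.

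\textbf{Remaining case $|V_2'| \ge 2$, $|V_2''| = 1$.} Since $V_2$ ends with $a = 23$, which is smaller than every symbol of $H''_{m,3}$, the last entry of $V_2$ has rank $1$. Hence the last entry of $V_1$, at $H_{m,3}$ position $8m - |V_2'|$, must be the minimum of $V_1$. I split on $|V_2'| \bmod 4$: if $|V_2'| \equiv 1 \pmod 4$ this position is upper, making $V_1$'s last entry maximal, a contradiction; if $|V_2'| \equiv 0$ or $2 \pmod 4$ it is medium, but since $|V_2| \ge 3$ the range of positions making up $V_1$ contains a lower position, whose value is strictly less than any medium by (S2), again contradicting minimality.

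\textbf{Main obstacle.} The delicate subcase is $|V_2'| \equiv 3 \pmod 4$, say $|V_2'| = 4q + 3$. Here the level skeletons of $V_1$ and $V_2$ coincide---with $V_2[|V_2|] = a$ playing the role of a ``lower'' of rank $1$---and (B3), (B4), together with $a$ being the global minimum of $V_2$, force the ranks at all lower and upper positions of $V_1$ and $V_2$ to already agree. Thus $V_1 \sim V_2$ reduces to order-isomorphism of the medium runs of $V_1$ and $V_2$, which are the two consecutive length-$(2q+2)$ segments $x_{4m-4q-3} \ldots x_{4m-2q-2}$ and $x_{4m-2q-1} \ldots x_{4m}$ of the medium subsequence $x_1, \ldots, x_{4m}$. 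Such an alignment would produce a square of length $4q + 4 \ge 4$ in the medium subsequence; but by (S3) that subsequence is order-isomorphic to the square-free base permutation of the high-medium-low construction, so no such square exists, finishing the proof.
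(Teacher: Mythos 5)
Your proof is correct, and for most of the case analysis it runs parallel to the paper's: the same decomposition $V_2=V_2'V_2''$ at the last symbol $y$ of $H''_{m,3}$, the same use of the forbidden patterns $2341$ and $3214$ via Lemma~\ref{L:BasicPrBicrucial} for the cases $|V_2'|\ge 2,\,|V_2''|\ge 2$ and $|V_2'|=1,\,|V_2''|\ge 4$, and the same direct $123$ versus $213$ comparison for the short cases. Where you genuinely diverge is the boundary case $|V_2''|\le 1$. The paper dispatches this in one line: the first symbol of $S_{m,3}$ equals $4k^2-6k+5$, which is precisely the deleted last symbol of $H_{m,3}$ (the global minimum, sitting at the end of the decreasing lower level), so $V_1V_2'$ followed by that one symbol is a factor of $H_{m,3}$ and $V_1\sim V_2$ would be a square in the square-free permutation $H_{m,3}$. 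You instead run a rank-by-level analysis split on $|V_2'|\bmod 4$, and in the hard residue class $|V_2'|\equiv 3\pmod 4$ you restrict the putative order-isomorphism to the (identical) sets of medium indices, obtaining two order-isomorphic consecutive blocks of length $2q+2$ in the medium subsequence, i.e.\ a square of length $4q+4$, contradicting (S3) and the square-freeness of the seed permutation. This is valid, but it is considerably longer and essentially re-derives, through the internals of the high-medium-low construction, what the paper gets for free from the square-freeness of $H_{m,3}$; the paper's observation that the first symbol of $S_{m,3}$ reconstitutes $H_{m,3}$ is the trick you missed. One small imprecision worth fixing: in the sub-case $|V_2'|\equiv 0,2\pmod 4$ you justify the presence of a lower position inside $V_1$ by ``$|V_2|\ge 3$,'' but a generic window of length $3$ can have level pattern $MUM$ with no lower position; what actually saves you is that for $|V_2'|=2$ the specific window is $M\,L\,M$, while in all other instances of these residue classes $|V_1|\ge 5$ and any four consecutive positions contain a lower one. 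The conclusion stands, but the stated reason should be tightened.
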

\begin{proof}
Let $y$ be the last symbol of $H''_{m,3}$.
Let $V'_2$ be the prefix of $V_2$ ending with the symbol $y$, 
and let $V''_2$ be the suffix of $V_2$ obtained from $V_2$ by removing the prefix $V'_2$.
Since $H_{m,3}$ is square-free and $V_1\sim V_2$, we have $|V''_2|\geq 2$.

Suppose that $|V''_2|\geq 4$.
The prefix of $V''_2$ of length $4$ is order-isomorphic to $2341$.
On the other hand, $V_1$ is a factor $H''_{m,3}$. Then, by Lemma \ref{L:BasicPrBicrucial}, any factor of $V_1$ of length $4$ is not order-isomorphic to $2341$.
Consequently, in this case $V_1\not\sim V_2$.

Suppose that $|V'_2|\geq 2$.
As we noted above, we can assume that $|V''_2|\geq 2$.
Let $V$ be the factor of $V_2$ consisting of the last two symbols of $V'_2$ and the first two symbols of $V''_2$.
Note that $V\sim 3214$. On the other hand, $V_1$ is a factor $H''_{m,3}$. Then, by Lemma \ref{L:BasicPrBicrucial}, any factor of $V_1$ of length $4$ is not order-isomorphic to $3214$.
Consequently, in this case $V_1\not\sim V_2$.

Suppose that $|V'_2|=1$ and $|V''_2|=2$.
In this case, we have $V_2\sim 213$ and $V_1\sim 123$, that is, $V_1\not\sim V_2$.

Therefore, if $V_1\sim V_2$, then $|V'_2|=1$ and $|V''_2|=3$. Thus,  $|V_2|=4$ and $V_2$ starts with $y$.
\end{proof}

\begin{lemma}\label{L:V1V2-3}
Let $V_1V_2$ be a factor of $B_{m,k}$, where $|V_1|=|V_2|$.
If $V_i$ contains the first and last symbols of $H''_{m,k}$ for some $i\in \{1,2\}$, then $V_1$ and $V_2$ are not order-isomorphic.
\end{lemma}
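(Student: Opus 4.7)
The plan is to split on the index $i$ from the hypothesis, then subdivide based on whether the relevant $V_i$ extends beyond $H''_{m,k}$ into $P_{m,k}$ (for $i=1$) or into $S_{m,k}$ (for $i=2$) within $B_{m,k}$.

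Consider Case $i=1$: write $V_1=X_1 H''_{m,k} X_2$, where $X_1$ is a (possibly empty) suffix of $P_{m,k}$ and $X_2$ is a (possibly empty) prefix of $S_{m,k}$; then $V_2$ lies entirely in $S_{m,k}$. If $X_1$ is empty, $V_1 V_2$ is a factor of $H''_{m,k} S_{m,k}$ in which $V_1$ contains the last symbol of $H''_{m,k}$, so Remark~\ref{Remark:V1V2} yields $V_1\not\sim V_2$. If $X_1$ is non-empty, its last symbol must be the last symbol of $P_{m,k}$, namely $4k^2-6k+5+2m$; by construction of $H_{m,k}$ this value coincides with $H_{m,k}(1)$, so the four consecutive entries of $V_1$ at positions $|X_1|,|X_1|+1,|X_1|+2,|X_1|+3$ take the values $H_{m,k}(1),H_{m,k}(2),H_{m,k}(3),H_{m,k}(4)$. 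Their level pattern is $L,M,U,M$ by (\ref{Eq:1}); combined with the inequality $x_1>x_2$ from the remark preceding Lemma~\ref{L:BasicPrBicrucial}, this quadruple is order-isomorphic to $1342$. But $V_2\subseteq S_{m,k}\sim R_k$, and a direct enumeration of length-4 factors of $R_k$ shows that every such factor is order-isomorphic to an element of $\{1234,3124,2341,3421,4312\}$, so $1342$ never appears in $S_{m,k}$. Hence $V_1\not\sim V_2$.

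Consider Case $i=2$: symmetrically, write $V_2=Y_1 H''_{m,k} Y_2$ with $V_1\subseteq P_{m,k}$. If $Y_2$ is empty, we apply the mirror version of Remark~\ref{Remark:V1V2} (which holds by the same symmetry used for Remark~\ref{Remark:P_m,kH_m,kS_m,k}) to the factor $V_1 V_2\subseteq P_{m,k} H''_{m,k}$, in which $V_2$ contains the first symbol of $H''_{m,k}$; this gives $V_1\not\sim V_2$. If $Y_2$ is non-empty, its first symbol must equal the first symbol of $S_{m,k}$, $4k^2-6k+5=H_{m,k}(8m+1)$, so the four consecutive entries of $V_2$ at positions $|Y_1|+8m-3,\ldots,|Y_1|+8m$ realise the quadruple $H_{m,k}(8m-2),H_{m,k}(8m-1),H_{m,k}(8m),H_{m,k}(8m+1)$. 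Their level pattern is $M,U,M,L$; combined with B5 (giving $x_{4m-1}<x_{4m}$), this quadruple is order-isomorphic to $2431$. Since $P_{m,k}\sim\mathrm{rev}(R_k)$, its length-4 factor patterns are the reverses $\{4321,1432,1243,2134,4213\}$ of those appearing in $R_k$, which does not contain $2431$, so again $V_1\not\sim V_2$.

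The main obstacle is proving the enumeration of length-4 patterns of $R_k$, i.e.\ that every length-4 factor of $R_k$ is order-isomorphic to one of $\{1234,3124,2341,3421,4312\}$. This is a routine but careful case analysis by how a length-4 window splits among the blocks $T_k$, $N_{k,k-1},\ldots,N_{k,1},1$: factors entirely in $T_k$ give $1234$; factors entirely in an $N_{k,j}$ block give $1234$ or $3124$ (because $N_k=31245\cdots(2k-1)$); factors straddling two successive pieces give $2341,3421,4312$ (depending on how many symbols lie on each side of the junction). Once this list is fixed, both cases above reduce to comparing a single length-4 junction pattern of $V_1$ or $V_2$ to the admissible patterns of the other factor.
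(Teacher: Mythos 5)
Your argument reaches the right conclusion and most of its computations check out (the identities $H_{m,k}(1)=4k^2-6k+5+2m=$ last symbol of $P_{m,k}$ and $H_{m,k}(8m+1)=4k^2-6k+5=$ first symbol of $S_{m,k}$ are correct, as are the junction patterns $1342$ and $2431$ and the enumeration $\{1234,3124,2341,3421,4312\}$ of length-$4$ patterns of $R_k$, which in fact strengthens Lemma~\ref{L:FactorsR_k}). But the route is much longer than the paper's, and the sub-division according to whether $V_i$ spills over into the neighbouring block is unnecessary. The paper's proof is essentially one line: since $V_i$ contains both the first and the last symbol of $H''_{m,k}$, it contains \emph{all} of $H''_{m,k}$, hence its length-$4$ suffix $U\sim 1243$ (for $i=1$) or its length-$4$ prefix $\sim 3421$ (for $i=2$); meanwhile the other factor lies entirely in $S_{m,k}$ (resp.\ $P_{m,k}$), which avoids $1243$ by Lemma~\ref{L:BasicPrBicrucial} (resp.\ avoids $3421$, since $\mathrm{rev}(P_{m,k})\sim R_k$ avoids $1243$ by Lemma~\ref{L:FactorsR_k}). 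These two patterns sit inside $H''_{m,k}$ itself, so they are available in every one of your sub-cases; no case split on $X_1$ or $Y_2$ is needed.

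There is one genuine gap as written: in the sub-case $i=2$, $Y_2$ empty, you invoke a ``mirror version'' of Remark~\ref{Remark:V1V2} for $P_{m,k}H''_{m,k}$ and justify it only by appeal to ``the same symmetry.'' Reversal does not deliver this for free: $\mathrm{rev}(P_{m,k}H''_{m,k})=\mathrm{rev}(H''_{m,k})\,\mathrm{rev}(P_{m,k})$, and while $\mathrm{rev}(P_{m,k})\sim R_k\sim S_{m,k}$, the permutation $\mathrm{rev}(H''_{m,k})$ is not an instance of the $H''_{m,k}$ construction (reversal turns the increasing upper level into a decreasing one and the decreasing lower level into an increasing one, breaking (B3)--(B4)), so Remark~\ref{Remark:V1V2} cannot simply be quoted; one would have to redo the seven-case analysis of Lemma~\ref{L:V1V2-1} at this junction. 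The claim is true, but your proof does not establish it. The repair is immediate with your own tools: even when $Y_2$ is empty, $V_2\supseteq H''_{m,k}$ contains the length-$4$ prefix of $H''_{m,k}$, which has level pattern $M,U,M,L$ and, using $x_1>x_2$, is order-isomorphic to $3421$; this pattern is absent from your list $\{4321,4213,1432,1243,2134\}$ of length-$4$ patterns of $P_{m,k}$, so $V_1\not\sim V_2$. Making that observation uniformly (and its counterpart with $1243$ for $i=1$) collapses your proof to the paper's.
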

\begin{proof}
Assume that $i=1$.
Let $U$ be the suffix of $H''_{m,k}$ of length $4$.
Note that $U\sim 1243$. Since $V_1$ contains the first and last symbols of $H''_{m,k}$, $U$ is a factor of $V_1$.
On the other hand, $V_2$ is a factor $S_{m,k}$. Then, by Lemma \ref{L:BasicPrBicrucial}, any factor of $V_2$ of length $4$ is not order-isomorphic to $1243$.
Therefore,  $V_1$ and $V_2$ are not order-isomorphic.
The proof for $i=2$ is similar.
\end{proof}

The main result of this paper is the following.

\begin{theorem}\label{Th:2}
The permutation $B_{m,k}$ is $k$-bicrucial for all $m\geq 1$ and $k\geq 3$.
\end{theorem}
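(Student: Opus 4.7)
The plan is to verify the three defining conditions of $k$-bicruciality in turn. First I would dispatch the right- and left-extension clauses essentially for free: $H''_{m,k}S_{m,k}$ is a $k$-right-crucial suffix of $B_{m,k}$ and $P_{m,k}H''_{m,k}$ is a $k$-left-crucial prefix (Remark \ref{Remark:P_m,kH_m,kS_m,k}), so Proposition \ref{Prop:Extension} immediately propagates the ``every one-symbol extension produces a $k$-power'' property from the prefix and the suffix to all of $B_{m,k}$. The bulk of the work lies in showing that $B_{m,k}$ itself is $k$-power-free.

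For this I would argue by contradiction, assuming a $k$-power $X = X_1 \cdots X_k$ inside $B_{m,k}$. Because both overlapping halves $P_{m,k}H''_{m,k}$ and $H''_{m,k}S_{m,k}$ are themselves $k$-power-free, $X$ must cross both internal junctions and, in particular, contain every symbol of $H''_{m,k}$; write $f$ and $\ell$ for its first and last symbols. If some single $X_i$ contains both $f$ and $\ell$, then since $k\ge 3$ this block has a neighbour $X_{i\pm 1}$, and Lemma \ref{L:V1V2-3} directly forbids $X_i \sim X_{i\pm 1}$. Otherwise distinct $X_a, X_b$ with $a<b$ carry $f$ and $\ell$; in that situation, if $b<k$ then $X_b X_{b+1}$ lies in $H''_{m,k}S_{m,k}$ with $X_b$ containing $\ell$, so Remark \ref{Remark:V1V2} gives $X_b \not\sim X_{b+1}$, and symmetrically if $a>1$ I would invoke the dual of Remark \ref{Remark:V1V2} on the $P_{m,k}H''_{m,k}$ side (which holds by reversal via $P_{m,k} \sim \mathrm{rev}(R_k)$, Remark \ref{Remark:S_m,k}) to rule out $X_{a-1} \sim X_a$.

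The only remaining configuration is the boundary case $a=1$ and $b=k$, in which $X_2, \ldots, X_{k-1}$ all sit inside $H''_{m,k}$. For $k \ge 4$ the pair $X_2 X_3$ is then a square in the square-free $H''_{m,k}$ (Lemma \ref{L:BasicPrBicrucial}(4))---immediate contradiction. For $k=3$ only $X_2$ is interior, so I would apply Lemma \ref{L:V1V2-2} to $V_1 V_2 = X_2 X_3$: its conclusion forces $|X_1|=|X_2|=|X_3|=4$ and $X_3$ to begin with $\ell$, and requiring $X_1$ to contain $f$ then yields the positional constraint $|H''_{m,3}| - 1 = 8m - 2 \le 8$, leaving only $m=1$.

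The main obstacle will be this leftover subcase $k=3$, $m=1$, where square-freeness no longer suffices and one must inspect the construction directly. I plan to defeat it by computing $X_1$ and $X_2$ explicitly: the last two symbols of $P_{1,3}$ are $4k^2-6k+8m+6 = 32$ and $4k^2-6k+5+2m = 25$, and the first two symbols of $H''_{1,3}$ are (by the level structure enforced in (B1)--(B5)) a medium-level symbol $C$ and an upper-level symbol $D$ satisfying $25 < C < D < 32$, so $X_1 \sim 4123$. On the other hand, $X_2$ occupies positions $4,5,6,7$ of $H_{1,3}$, whose level pattern is medium-lower-medium-upper; the level inequalities force the maximum of $X_2$ to its fourth entry and the minimum to its second, hence $X_2 \sim 2134$ or $X_2 \sim 3124$, neither of which equals $4123$. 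This gives $X_1 \not\sim X_2$, the desired contradiction.
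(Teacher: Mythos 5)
Your proof is correct and follows the paper's argument almost exactly: the same use of Proposition \ref{Prop:Extension} for the two extension clauses, the same ``$X$ must contain all of $H''_{m,k}$'' observation, and the same case split handled by Lemma \ref{L:V1V2-3}, Remark \ref{Remark:V1V2} and its left-side dual, square-freeness of $H''_{m,k}$ for $k\ge 4$, and Lemma \ref{L:V1V2-2} for $k=3$. The one place you diverge is the very last subcase: the paper applies the mirror image of Lemma \ref{L:V1V2-2} to $X_1X_2$ on the $P_{m,k}H''_{m,k}$ side as well, forcing $X_1$ to \emph{end} with the first symbol of $H''_{m,3}$, whence $|H''_{m,3}|=|X_2|+2=6<7$ and the contradiction is uniform in $m$. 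You instead use only the right-hand instance of that lemma, which leaves the residual case $k=3$, $m=1$, and you kill it by computing the patterns of $X_1$ and $X_2$ explicitly from the last two symbols of $P_{1,3}$ and the level structure of $H_{1,3}$ imposed by (B1)--(B5); your computation ($X_1\sim 4123$ while $X_2\sim 2134$ or $3124$) checks out, since (B2) pins down $i=1$ in (\ref{Eq:1}) and hence the level pattern of positions $4,5,6,7$. Both endings are valid; the paper's symmetric application is shorter and avoids any case analysis on $m$, while yours trades that symmetry for a concrete verification that depends only on one side of the construction.
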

\begin{proof}
Lemma \ref{L:BasicPrBicrucial} implies that any extension of $S_{m,k}$ ($P_{m,k}$) to the right (left) contains a $k$-power.
In addition, $S_{m,k}$ ($P_{m,k}$) is a suffix (prefix) of $B_{m,k}$. 
Therefore, any extension of $B_{m,k}$ to the right (left) also contains a $k$-power due to Proposition \ref{Prop:Extension}.

So, it remains to prove that $B_{m,k}$ is $k$-power-free.
Suppose $B_{m,k}$ contains a $k$-power $X=X_1\ldots X_k$. 
Let $x$ and $y$ be the first and last symbols of $H''_{m,k}$.
By Remark~\ref{Remark:P_m,kH_m,kS_m,k} and Lemma \ref{L:BasicPrBicrucial}, the permutations $P_{m,k}H''_{m,k}$ and $S_{m,k}$ are $k$-power-free.
This implies that $X$ contains the symbol $y$.
Similarly we can prove that $X$ contains the symbol $x$.
Therefore, $X_i$ contains $x$ for some $i\in [k]$ and  $X_j$ contains $y$ for some $j\in [k]$.
Let us consider two cases.

\textbf{Case $1$: $i=j$.} Assume that $j<k$. In this case, $X_jX_{j+1}$ is a factor of $B_{m,k}$.
Applying Lemma \ref{L:V1V2-3} for $V_1=X_j$ and $V_2=X_{j+1}$, we obtain that $X_j$ and $X_{j+1}$ are not order-isomorphic.
Consequently, in this case $X$ is not a $k$-power.
The proof for $j=k$ is similar (in this case, we apply Lemma \ref{L:V1V2-3} for $V_1=X_{j-1}$ and $V_2=X_j$).

\textbf{Case $2$: $i<j$.} Assume that $j<k$. Then $X_jX_{j+1}$ is a factor of $H''_{m,k}S_{m,k}$.
Applying Remark \ref{Remark:V1V2} for $V_1=X_j$ and $V_2=X_{j+1}$, we obtain that $X_j$ and $X_{j+1}$ are not order-isomorphic.
Hence, in this case $X$ is not a $k$-power. So, we have $j=k$.
Similar arguments imply that $i=1$.
Since $H''_{m,k}$ is square-free due to Lemma \ref{L:BasicPrBicrucial}, we have $k=3$.
Applying Lemma \ref{L:V1V2-2} for $V_1=X_2$ and $V_2=X_3$, we obtain that $|X_3|=4$ and $X_3$ starts with $y$.
Similarly we can show that $X_1$ ends with $x$.
Then $|H''_{m,k}|=|X_2|+2=6$.
On the other hand, we have $|H''_{m,k}|\geq 7$.
Therefore, $X$ is not a $k$-power.
\end{proof}

\begin{corollary}
For every $k\geq 3$, there exist arbitrarily long $k$-bicrucial permutations.
\end{corollary}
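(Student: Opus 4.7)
The plan is to mirror the structure used for Theorem \ref{Th:1}, handling the two crucial directions via Proposition \ref{Prop:Extension} and then reducing $k$-power-freeness of $B_{m,k}$ to the technical lemmas stated just before the theorem.

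First I would dispatch the easy half. By Lemma \ref{L:BasicPrBicrucial}, $S_{m,k}$ is $k$-right-crucial, so every right extension of $S_{m,k}$ contains a $k$-power; since $S_{m,k}$ is a suffix of $B_{m,k}$, Proposition \ref{Prop:Extension} transfers this property to $B_{m,k}$. The symmetric argument, with $P_{m,k}$ as a prefix and using that $P_{m,k}$ is $k$-left-crucial, handles left extensions.

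The core task is showing $B_{m,k}$ itself is $k$-power-free. I would argue by contradiction: suppose $X=X_1\ldots X_k$ is a $k$-power inside $B_{m,k}$. The key preliminary observation is that both $P_{m,k}H''_{m,k}$ and $H''_{m,k}S_{m,k}$ are already $k$-power-free (the former by Remark \ref{Remark:P_m,kH_m,kS_m,k}, the latter because $H''_{m,k}S_{m,k}$ is $k$-right-crucial, also via that remark). Consequently $X$ must meet $P_{m,k}$, $H''_{m,k}$, and $S_{m,k}$ all at once, so $X$ contains both the first symbol $x$ and the last symbol $y$ of $H''_{m,k}$. Pick indices $i\leq j$ with $x\in X_i$ and $y\in X_j$, and split into cases on whether $i=j$ or $i<j$.

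When $i=j$, the single block $X_j$ contains every symbol of $H''_{m,k}$, including both $x$ and $y$. I would pair $X_j$ with its neighbor $X_{j+1}$ (or $X_{j-1}$ if $j=k$) and invoke Lemma \ref{L:V1V2-3} to conclude they are not order-isomorphic, contradicting that $X$ is a $k$-power. When $i<j$, if $j<k$ then $X_jX_{j+1}$ sits inside $H''_{m,k}S_{m,k}$ and Remark \ref{Remark:V1V2} forces $X_j\not\sim X_{j+1}$; therefore $j=k$, and the symmetric argument on the left side forces $i=1$. Since $X_2,\ldots,X_{k-1}$ all lie inside $H''_{m,k}$ and $H''_{m,k}$ is square-free by Lemma \ref{L:BasicPrBicrucial}, we must have $k=3$. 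This is where the main difficulty lies, and I would now apply Lemma \ref{L:V1V2-2} to the pair $X_2X_3$ (factor of $H''_{m,3}S_{m,3}$ with $X_3$ containing $y$) to force $|X_3|=4$ with $X_3$ starting at $y$, and the mirror argument on $X_1X_2$ to force $X_1$ to end at $x$. Together these pin down $|H''_{m,k}|=|X_2|+2=6$, directly contradicting $|H''_{m,k}|=8m-1\geq 7$.

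The anticipated obstacle is exactly this final $k=3$ sub-case: the general symmetry arguments eliminate everything except a narrow window in which $X_1$ and $X_3$ each straddle the $H''_{m,k}$ boundaries by just a few symbols, and it is only the precise description of these short boundary factors supplied by Lemma \ref{L:V1V2-2} (together with the length estimate $|H''_{m,k}|\geq 7$) that closes the argument. The corollary that arbitrarily long $k$-bicrucial permutations exist then follows immediately by letting $m\to\infty$.
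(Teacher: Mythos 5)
Your proposal is correct and takes essentially the same route as the paper: it reproduces the paper's proof of Theorem \ref{Th:2} step for step (the extension argument via Proposition \ref{Prop:Extension}, the case analysis on where the boundary symbols $x$ and $y$ of $H''_{m,k}$ fall, and the final $k=3$ contradiction via Lemma \ref{L:V1V2-2} against $|H''_{m,k}|=8m-1\geq 7$), after which the corollary follows by letting $m\to\infty$. The only cosmetic difference is that you derive the presence of $y$ in $X$ from the $k$-power-freeness of $P_{m,k}H''_{m,k}$ and $H''_{m,k}S_{m,k}$ rather than of $P_{m,k}H''_{m,k}$ and $S_{m,k}$ as the paper does; both are available from Remark \ref{Remark:P_m,kH_m,kS_m,k} and Lemma \ref{L:BasicPrBicrucial}.
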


\section{Open problems}\label{Open problem}
We proved that for every $k\geq 3$ there exist right-crucial $k$-power-free permutations of any length at least $(k-1)(2k+1)$.
On the other hand, computational experiments show that there are no right-crucial $3$-power-free permutations of length less than $14$.
So, it seems interesting to consider the following question.

\begin{problem}
Are there right-crucial $k$-power-free permutations of length less than $(k-1)(2k+1)$ for $k\geq 4$? 
\end{problem}




\begin{thebibliography}{99}

\bibitem{AGHK10}
S. Avgustinovich, A. Glen, B. V. Halld\'{o}rsson, S. Kitaev, On shortest crucial words avoiding abelian powers,
Discrete Applied Mathematics 158(6) (2010) 605--607.

\bibitem{AKPV11}
S. Avgustinovich, S. Kitaev, A. Pyatkin, A. Valyuzhenich, On square-free permutations, Journal of Automata, Languages and Combinatorics 16(1) (2011) 3--10.

\bibitem{AKV12}
S. Avgustinovich, S. Kitaev, A. Valyuzhenich, Crucial and bicrucial permutations with respect to arithmetic monotone patterns, Siberian Electronic Mathematical Reports 9 (2012) 660--671.


\bibitem{AKT23}
S. Avgustinovich, S. Kitaev, A. Taranenko, On five types of crucial permutations with respect to monotone patterns, The Electronic Journal of Combinatorics
30(1) (2023) \#P1.40. 

\bibitem{B04}
E. M. Bullock, Improved bounds on the length of maximal abelian square-free words, The Electronic Journal of Combinatorics 11(1) (2004) \#R17.

\bibitem{CM01}
L. J. Cummings, M. Mays, A one-sided Zimin construction, The Electronic Journal of Combinatorics 8(1) (2001) \#R27.




\bibitem{C24}
Y. Choi, Counting crucial permutations with respect to monotone patterns, Discrete Mathematics 347(4) (2024) 113861.


\bibitem{DGR21}
A. Dudek, J. Grytczuk, A. Ruci\'nski, Variations on twins in permutations, The Electronic Journal of Combinatorics 28(3) (2021) \#P3.19.


\bibitem{DGR21-2}
A. Dudek, J. Grytczuk, A. Ruci\'nski, Tight multiple twins in permutations, Annals of Combinatorics 25 (2021) 1075--1094.

\bibitem{GKKLN15}
I. Gent, S. Kitaev, A. Konovalov, S. Linton, P. Nightingale, S-crucial and bicrucial permutations with respect to squares, 
Journal of Integer Sequences 18 (2015) Article 15.6.5.


\bibitem{GHK10}
 A. Glen, B. V. Halld\'{o}rsson, S. Kitaev, Crucial abelian $k$-power-free words, Discrete Mathematics and Theoretical Computer Science,
 12(5) (2010) 83--96.


\bibitem{GJ22}
C. Groenland, T. Johnston, The lengths for which bicrucial square-free permutations exist, Enumerative Combinatorics and Applications 
2(4) (2022) Article \#S4PP4.

\bibitem{K03}
M. Korn, Maximal abelian square-free words of short length, Journal of Combinatorial Theory, Series A 102(1) (2003) 207--211.


\end{thebibliography}
\end{document}